\documentclass[a4paper,10pt, final]{amsart}
\usepackage[utf8x]{inputenc}
\usepackage{amsmath, amsthm, amsfonts,amssymb,epsfig,enumerate, wrapfig, tikz , scalerel, color, tensor,comment, todo, stmaryrd, caption}
\usepackage{xcolor,colortbl}
\usetikzlibrary{decorations.markings}
\newcounter{res}[section]
\numberwithin{res}{section}
\newtheorem{thm}[res]{Theorem}
\newtheorem{lem}[res]{Lemma}

\newtheorem*{claim*}{Claim}
\newtheorem{prop}[res]{Proposition}

\theoremstyle{definition}

\newtheorem{dfn}[res]{Definition}

\newtheorem{rmk}[res]{Remark}

\newcommand{\NN}{\mathbb N} 
\newcommand{\ZZ}{\mathbb Z} 
\newcommand{\CC}{\mathbb C} 

\newcommand{\RR}{\mathbb R} 






\renewcommand {\epsilon}{\varepsilon}

\renewcommand {\leq}{\leqslant}
\renewcommand {\geq}{\geqslant}

\newcommand{\qbin}[2]{\ensuremath
\begin{pmatrix}
  #1 + #2 \\
#1 \quad #2
\end{pmatrix}_{\!\!q}
}
\newcommand{\qbinb}[3]{\ensuremath
\begin{pmatrix}
  #1 \\
#2 \quad #3
\end{pmatrix}_{\!\!q}
}

\setlength{\marginparwidth}{1.2in}
\let\oldmarginpar\marginpar
\renewcommand\marginpar[1]{\oldmarginpar{\color{red}\raggedleft\bf #1}}

\newcommand\kups[1]{\left\langle #1 \right\rangle}
\newcommand\kup[1]{\ensuremath{{\stretchleftright {\bigg\langle }#1 {\bigg\rangle}}}}
\newcommand\kupc[1]{\ensuremath{{\stretchleftright {\bigg\langle }#1 {\bigg\rangle}}_{\mathrm{col}}}}
\newcommand\kupcs[1]{\ensuremath{\left\langle #1 \right\rangle}_{\mathrm{col}}}

\let\oldtodo\todo
\renewcommand{\todo}[1]{{\color{blue} #1}\oldtodo[]{#1}}

\newcommand{\imagesfolder}{.}

\newcommand{\sll}{\ensuremath{\mathfrak{sl}}}

\title{A new way to evaluate MOY graphs}
\author{Louis-Hadrien Robert}
\address{Universit\"a{}t Hamburg \\ Bundesstra\ss{}e 55\\ 20146 Hamburg \\ Germany }
\email{louis-hadrien.robert@uni-hamburg.de}

\newcommand{\Y}{\ensuremath{\rotatebox[origin=c]{180}{\textnormal{\textsf{Y}}}}}

\begin{document}
\tikzset{>=latex}
\newcommand{\digona}{\ensuremath{\vcenter{\hbox{\tikz[scale=0.4]{
\coordinate (B) at (0,0);
\coordinate (V1) at (0,0.5);
\coordinate (V2) at (0,2.5);
\coordinate (T) at (0,3);
\draw[white] (0, -0.5) -- (0, 3.5);
\draw[->] (B) -- (V1) node[midway, left] {\tiny{$m+n$}};
\draw[->] (V2) -- (T) node[midway, left] {\tiny{$m+n$}};
\draw[->] (V1) .. controls +(+0.5, +0.5) and +(+0.5, -0.5).. (V2) node[midway, right] {\tiny{$n$}};
\draw[->] (V1) .. controls +(-0.5, +0.5) and +(-0.5, -0.5).. (V2) node[midway, left] {\tiny{$m$}};
}}}}}

\newcommand{\digonNk}{\ensuremath{\vcenter{\hbox{\tikz[scale=0.4]{
\coordinate (B) at (0,0);
\coordinate (V1) at (0,0.5);
\coordinate (V2) at (0,2.5);
\coordinate (T) at (0,3);
\draw[white] (0, -0.5) -- (0, 3.5);
\draw[->] (B) -- (V1) node[midway, left] {\tiny{$N$}};
\draw[->] (V2) -- (T) node[midway, left] {\tiny{$N$}};
\draw[->] (V1) .. controls +(+1, +0.5) and +(+1, -0.5).. (V2) node[midway, right] {\tiny{$k$}};
\draw[->] (V1) .. controls +(-1, +0.5) and +(-1, -0.5).. (V2) node[midway, left] {\tiny{$N-k$}};
}}}}}

\newcommand{\verta}{\ensuremath{\vcenter{\hbox{\tikz[scale=0.4]{
\coordinate (B) at (0,0);
\coordinate (T) at (0,3);
\draw[white] (0, -0.5) -- (0, 3.5);
\draw[->] (B) -- (T) node[midway, right] {\tiny{$m+n$}};
}}}}}

\newcommand{\vertN}{\ensuremath{\vcenter{\hbox{\tikz[scale=0.4]{
\coordinate (B) at (0,0);
\coordinate (T) at (0,3);
\draw[white] (0, -0.5) -- (0, 3.5);
\draw[->] (B) -- (T) node[midway, right] {\tiny{$N$}};
}}}}}

\newcommand{\digonb}{\ensuremath{\vcenter{\hbox{\tikz[scale=0.4]{
\coordinate (B) at (0,0);
\coordinate (V1) at (0,0.5);
\coordinate (V2) at (0,2.5);
\coordinate (T) at (0,3);
\draw[white] (0, -0.5) -- (0, 3.5);
\draw[->] (B) -- (V1) node[midway, left] {\tiny{$m$}};
\draw[->] (V2) -- (T) node[midway, left] {\tiny{$m$}};
\draw[<-] (V1) .. controls +(+0.5, +0.5) and +(+0.5, -0.5).. (V2) node[midway, right] {\tiny{$n$}};
\draw[->] (V1) .. controls +(-0.5, +0.5) and +(-0.5, -0.5).. (V2) node[midway, left] {\tiny{$m+n$}};
}}}}}

\newcommand{\vertb}{\ensuremath{\vcenter{\hbox{\tikz[scale=0.4]{
\coordinate (B) at (0,0);
\coordinate (T) at (0,3);
\draw[white] (0, -0.5) -- (0, 3.5);
\draw[->] (B) -- (T) node[midway, right] {\tiny{$m$}};
}}}}}

\newcommand{\vertc}{\ensuremath{\vcenter{\hbox{\tikz[scale=0.4]{
\coordinate (B) at (0,0);
\coordinate (T) at (0,3);
\draw[white] (0, -0.5) -- (0, 3.5);
\draw[<-] (B) -- (T) node[midway, right] {\tiny{$N-m$}};
}}}}}

\newcommand{\digonc}{\ensuremath{\vcenter{\hbox{\tikz[scale=0.4]{
\coordinate (B) at (0,0);
\coordinate (V1) at (0,0.5);
\coordinate (V2) at (0,2.5);
\coordinate (T) at (0,3);
\draw[white] (0, -0.5) -- (0, 3.5);
\draw[<-] (B) -- (V1) node[midway, left] {\tiny{$N-m$}};
\draw[<-] (V2) -- (T) node[midway, left] {\tiny{$N-m$}};
\draw[<-] (V1) .. controls +(+0.5, +0.5) and +(+0.5, -0.5).. (V2) node[midway, right] {\tiny{$n$}};
\draw[<-] (V1) .. controls +(-0.5, +0.5) and +(-0.5, -0.5).. (V2) node[midway, left] {\tiny{$N-(m+n)$}};
}}}}}

\newcommand{\stemptya}{
\draw[white, opacity = 0] (0, -0.5) -- (0, 3.5);
\coordinate (B) at (0,0);
\coordinate (V1) at (0,1);
\coordinate (V2) at (1,2);
\coordinate (T1) at (-2,3);
\coordinate (T2) at (0,3);
\coordinate (T3) at (2,3);
\coordinate (la) at (0, -1);
\draw[dotted] (B)-- (V1) --(T1);
\draw[dotted] (T2)-- (V2) --(T3);
\draw[dotted] (V2) --(V1);
}

\newcommand{\stemptyb}{
\draw[white, opacity=0] (0, -0.5) -- (0, 3.5);
\coordinate (B) at (0,0);
\coordinate (V1) at (0,1);
\coordinate (V2) at (-1,2);
\coordinate (T1) at (-2,3);
\coordinate (T2) at (0,3);
\coordinate (T3) at (2,3);
\coordinate (la) at (0, -1);
\draw[dotted] (B)-- (V1) --(T3);
\draw[dotted] (T1)-- (V2) --(T2);
\draw[dotted] (V2) --(V1);
}

\newcommand{\stnoa}{\ensuremath{\vcenter{\hbox{\tikz[scale=0.2]{
\stemptya
\node at (la) {$0$};
}}}}}

\newcommand{\stnob}{\ensuremath{\vcenter{\hbox{\tikz[scale=0.2]{
\stemptyb
\node at (la) {$0$};
}}}}}

\newcommand{\stonea}{\ensuremath{\vcenter{\hbox{\tikz[scale=0.2]{
\stemptya
\draw[->] (B) -- (V1) --(T1); 
\node at (la) {$0$};
}}}}}

\newcommand{\stoneb}{\ensuremath{\vcenter{\hbox{\tikz[scale=0.2]{
\stemptyb
\draw[->] (B) -- (V1) --(T1); 
\node at (la) {$0$};
}}}}}

\newcommand{\sttwoa}{\ensuremath{\vcenter{\hbox{\tikz[scale=0.2]{
\stemptya
\draw[->] (B) -- (V1) -- (V2)--(T2); 
\node at (la) {$0$};
}}}}}

\newcommand{\sttwob}{\ensuremath{\vcenter{\hbox{\tikz[scale=0.2]{
\stemptyb
\draw[->] (B) -- (V1) -- (V2) --(T2); 
\node at (la) {$0$};
}}}}}

\newcommand{\stthreea}{\ensuremath{\vcenter{\hbox{\tikz[scale=0.2]{
\stemptya
\draw[->] (B) -- (V1) --(T3); 
\node at (la) {$0$};
}}}}}

\newcommand{\stthreeb}{\ensuremath{\vcenter{\hbox{\tikz[scale=0.2]{
\stemptyb
\draw[->] (B) -- (V1) --(T3); 
\node at (la) {$0$};
}}}}}

\newcommand{\stdonea}{\ensuremath{\vcenter{\hbox{\tikz[scale=0.2]{
\stemptya
\draw[<-] (B) -- (V1) --(T1); 
\node at (la) {$0$};
}}}}}

\newcommand{\stdoneb}{\ensuremath{\vcenter{\hbox{\tikz[scale=0.2]{
\stemptyb
\draw[<-] (B) -- (V1) --(T1); 
\node at (la) {$0$};
}}}}}

\newcommand{\stdtwoa}{\ensuremath{\vcenter{\hbox{\tikz[scale=0.2]{
\stemptya
\draw[<-] (B) -- (V1) -- (V2)--(T2); 
\node at (la) {$0$};
}}}}}

\newcommand{\stdtwob}{\ensuremath{\vcenter{\hbox{\tikz[scale=0.2]{
\stemptyb
\draw[<-] (B) -- (V1) -- (V2) --(T2); 
\node at (la) {$0$};
}}}}}

\newcommand{\stdthreea}{\ensuremath{\vcenter{\hbox{\tikz[scale=0.2]{
\stemptya
\draw[<-] (B) -- (V1) --(T3); 
\node at (la) {$0$};
}}}}}

\newcommand{\stdthreeb}{\ensuremath{\vcenter{\hbox{\tikz[scale=0.2]{
\stemptyb
\draw[<-] (B) -- (V1) --(T3); 
\node at (la) {$0$};
}}}}}

\newcommand{\stonetwoa}{\ensuremath{\vcenter{\hbox{\tikz[scale=0.2]{
\stemptya
\draw[->] (T1) -- (V1) --(V2) --(T2); 
\node at (la) {$+\frac12$};
}}}}}

\newcommand{\stonetwob}{\ensuremath{\vcenter{\hbox{\tikz[scale=0.2]{
\stemptyb
\draw[->] (T1) -- (V2) --(T2); 
\node at (la) {$+\frac12$};
}}}}}

\newcommand{\sttwoonea}{\ensuremath{\vcenter{\hbox{\tikz[scale=0.2]{
\stemptya
\draw[<-] (T1) -- (V1) --(V2) --(T2); 
\node at (la) {$-\frac12$};
}}}}}

\newcommand{\sttwooneb}{\ensuremath{\vcenter{\hbox{\tikz[scale=0.2]{
\stemptyb
\draw[<-] (T1) -- (V2) --(T2); 
\node at (la) {$-\frac12$};
}}}}}

\newcommand{\stonethreea}{\ensuremath{\vcenter{\hbox{\tikz[scale=0.2]{
\stemptya
\draw[->] (T1) -- (V1)  --(T3); 
\node at (la) {$+\frac12$};
}}}}}

\newcommand{\stonethreeb}{\ensuremath{\vcenter{\hbox{\tikz[scale=0.2]{
\stemptyb
\draw[->] (T1) -- (V1) --(T3); 
\node at (la) {$+\frac12$};
}}}}}

\newcommand{\stthreeonea}{\ensuremath{\vcenter{\hbox{\tikz[scale=0.2]{
\stemptya
\draw[<-] (T1) -- (V1)  --(T3); 
\node at (la) {$-\frac12$};
}}}}}

\newcommand{\stthreeoneb}{\ensuremath{\vcenter{\hbox{\tikz[scale=0.2]{
\stemptyb
\draw[<-] (T1) -- (V1) --(T3); 
\node at (la) {$-\frac12$};
}}}}}

\newcommand{\sttwothreea}{\ensuremath{\vcenter{\hbox{\tikz[scale=0.2]{
\stemptya
\draw[->] (T2) -- (V2) -- (T3); 
\node at (la) {$+\frac12$};
}}}}}

\newcommand{\sttwothreeb}{\ensuremath{\vcenter{\hbox{\tikz[scale=0.2]{
\stemptyb
\draw[->] (T2) -- (V2) -- (V1) -- (T3); 
\node at (la) {$+\frac12$};
}}}}}

\newcommand{\stthreetwoa}{\ensuremath{\vcenter{\hbox{\tikz[scale=0.2]{
\stemptya
\draw[<-] (T2) -- (V2) -- (T3); 
\node at (la) {$-\frac12$};
}}}}}

\newcommand{\stthreetwob}{\ensuremath{\vcenter{\hbox{\tikz[scale=0.2]{
\stemptyb
\draw[<-] (T2) -- (V2) -- (V1) -- (T3); 
\node at (la) {$-\frac12$};
}}}}}

\newcommand{\stgamma}{\ensuremath{\vcenter{\hbox{\tikz[scale=0.3]{
\coordinate (B) at (0,0);
\coordinate (V1) at (0,1);
\coordinate (V2) at (1,2);
\coordinate (T1) at (-2,3);
\coordinate (T2) at (0,3);
\coordinate (T3) at (2,3);
\draw[->] (B) -- (V1) node [at start, below] {\tiny{$i+j+k$}};
\draw[->] (V1) -- (T1) node [at end, above] {\tiny{$i$}};
\draw[->] (V1)  -- (V2) node[midway, right] {\tiny{$j+k$}};
\draw[->] (V2) -- (T2) node[at end, above] {\tiny{$j$}};
\draw[->] (V2) -- (T3) node[at end, above] {\tiny{$k$}};
}}}}}

\newcommand{\stgammaprime}{\ensuremath{\vcenter{\hbox{\tikz[scale=0.3]{
\coordinate (B) at (0,0);
\coordinate (V1) at (0,1);
\coordinate (V2) at (-1,2);
\coordinate (T1) at (-2,3);
\coordinate (T2) at (0,3);
\coordinate (T3) at (2,3);
\draw[->] (B) -- (V1) node [at start, below] {\tiny{$i+j+k$}};
\draw[->] (V1) -- (T3) node [at end, above] {\tiny{$k$}};
\draw[->] (V1)  -- (V2) node[midway, left] {\tiny{$i+j$}};
\draw[->] (V2) -- (T1) node[at end, above] {\tiny{$i$}};
\draw[->] (V2) -- (T2) node[at end, above] {\tiny{$j$}};
}}}}}
\newcommand{\squarea}{\ensuremath{\vcenter{\hbox{\tikz[scale=0.4]{
\coordinate (B1) at (-1,0);
\coordinate (B2) at (1,0);
\coordinate (C1) at (-1,1);
\coordinate (D1) at (-1,2);
\coordinate (C2) at (1,1);
\coordinate (D2) at (1,2);
\coordinate (T1) at (-1,3);
\coordinate (T2) at (1,3);
\draw[->] (B1) -- (C1) node[at start, below] {\tiny{$1$}};
\draw[->] (D1) -- (C1) node[midway, left   ] {\tiny{$m$}};
\draw[->] (D1) -- (T1) node[at end , above ] {\tiny{$1$}};
\draw[->] (C2) -- (B2) node[at end, below] {\tiny{$m$}};
\draw[->] (C2) -- (D2) node[midway, right] {\tiny{$1$}};
\draw[->] (T2) -- (D2) node[at start, above] {\tiny{$m$}};
\draw[->] (D2) -- (D1) node[midway, above] {\tiny{$m+1$}};
\draw[->] (C1) -- (C2) node[midway, below] {\tiny{$m+1$}};
}}}}}

\newcommand{\twoverta}{\ensuremath{\vcenter{\hbox{\tikz[scale=0.4]{
\coordinate (B1) at (-1,0);
\coordinate (T1) at (-1,3);
\coordinate (B2) at (1,0);
\coordinate (T2) at (1,3);
\draw[->] (B1) -- (T1) node[midway, left] {\tiny{$1$}};
\draw[->] (T2) -- (B2) node[midway, right] {\tiny{$m$}};
}}}}}

\newcommand{\doubleYa}{\ensuremath{\vcenter{\hbox{\tikz[scale=0.4]{
\coordinate (B1) at (-1,0);
\coordinate (T1) at (-1,3);
\coordinate (C) at (0,1);
\coordinate (D) at (0,2);
\coordinate (B2) at (1,0);
\coordinate (T2) at (1,3);
\draw[->] (B1) -- (C) node[at start, below] {\tiny{$1$}};
\draw[->] (C) -- (B2) node[at end, below] {\tiny{$m$}};
\draw[->] (D) -- (C) node[midway, left] {\tiny{$m-1$}};
\draw[->] (T2) -- (D) node[at start, above] {\tiny{$m$}};
\draw[->] (D) -- (T1) node[at end, above] {\tiny{$1$}};
}}}}}

\newcommand{\squareb}{\ensuremath{\vcenter{\hbox{\tikz[scale=0.55]{
\coordinate (B1) at (-1,0);
\coordinate (B2) at (1,0);
\coordinate (C1) at (-1,1);
\coordinate (D1) at (-1,2);
\coordinate (C2) at (1,1);
\coordinate (D2) at (1,2);
\coordinate (T1) at (-1,3);
\coordinate (T2) at (1,3);
\draw[->] (B1) -- (C1) node[at start, below] {\tiny{$1$}};
\draw[->] (C1) -- (D1) node[midway, left   ] {\tiny{$l+n$}};
\draw[->] (D1) -- (T1) node[at end , above ] {\tiny{$l$}};
\draw[->] (B2) -- (C2) node[at start, below] {\tiny{$m+l-1$}};
\draw[->] (C2) -- (D2) node[midway, right] {\tiny{$m-n$}};
\draw[->] (D2) -- (T2) node[at end, above] {\tiny{$m$}};
\draw[->] (D1) -- (D2) node[midway, above] {\tiny{$n$}};
\draw[->] (C2) -- (C1) node[midway, below] {\tiny{$l+n-1$}};
}}}}}

\newcommand{\squarec}{\ensuremath{\vcenter{\hbox{\tikz[xscale=0.65, yscale=0.55]{
\coordinate (B1) at (-1,0);
\coordinate (B2) at (1,0);
\coordinate (C1) at (-1,1);
\coordinate (D1) at (-1,2);
\coordinate (C2) at (1,1);
\coordinate (D2) at (1,2);
\coordinate (T1) at (-1,3);
\coordinate (T2) at (1,3);
\draw[->] (B1) -- (C1) node[at start, below] {\tiny{$n$}};
\draw[->] (C1) -- (D1) node[midway, left   ] {\tiny{$n+k $}};
\draw[->] (D1) -- (T1) node[at end , above ] {\tiny{$m$}};
\draw[->] (B2) -- (C2) node[at start, below] {\tiny{$m+l$}};
\draw[->] (C2) -- (D2) node[midway, right] {\tiny{$m+l-k$}};
\draw[->] (D2) -- (T2) node[at end, above] {\tiny{$n+l$}};
\draw[->] (D1) -- (D2) node[midway, above] {\tiny{$n+k-m$}};
\draw[->] (C2) -- (C1) node[midway, below] {\tiny{$k$}};
}}}}}

\newcommand{\squared}{\ensuremath{\vcenter{\hbox{\tikz[yscale=0.55, xscale=0.65]{
\coordinate (B1) at (-1,0);
\coordinate (B2) at (1,0);
\coordinate (C1) at (-1,1);
\coordinate (D1) at (-1,2);
\coordinate (C2) at (1,1);
\coordinate (D2) at (1,2);
\coordinate (T1) at (-1,3);
\coordinate (T2) at (1,3);
\draw[->] (B1) -- (C1) node[at start, below] {\tiny{$n$}};
\draw[->] (C1) -- (D1) node[midway, left   ] {\tiny{$m-j$}};
\draw[->] (D1) -- (T1) node[at end , above ] {\tiny{$m$}};
\draw[->] (B2) -- (C2) node[at start, below] {\tiny{$m+l$}};
\draw[->] (C2) -- (D2) node[midway, right] {\tiny{$n+l+j$}};
\draw[->] (D2) -- (T2) node[at end, above] {\tiny{$n+l$}};
\draw[->] (D2) -- (D1) node[midway, above] {\tiny{$j$}};
\draw[->] (C1) -- (C2) node[midway, below] {\tiny{$n+j-m$}};
}}}}}

\newcommand{\doubleYb}{\ensuremath{\vcenter{\hbox{\tikz[scale=0.4]{
\coordinate (B1) at (-1,0);
\coordinate (T1) at (-1,3);
\coordinate (C) at (0,1);
\coordinate (D) at (0,2);
\coordinate (B2) at (1,0);
\coordinate (T2) at (1,3);
\draw[->] (B1) -- (C) node[at start, below] {\tiny{$1$}};
\draw[<-] (C) -- (B2) node[at end, below] {\tiny{$m+l-1$}};
\draw[<-] (D) -- (C) node[midway, left] {\tiny{$l+m$}};
\draw[<-] (T2) -- (D) node[at start, above] {\tiny{$m$}};
\draw[->] (D) -- (T1) node[at end, above] {\tiny{$l$}};
}}}}}

\newcommand{\bigHb}{\ensuremath{\vcenter{\hbox{\tikz[scale=0.4]{
\coordinate (B1) at (-1,0);
\coordinate (T1) at (-1,3);
\coordinate (M1) at (-1,1.5);
\coordinate (M2) at (1,1.5);
\coordinate (B2) at (1,0);
\coordinate (T2) at (1,3);
\draw[->] (B1) -- (M1) node[at start, below] {\tiny{$1$}};
\draw[->] (B2) -- (M2) node[at start, below] {\tiny{$m+l-1$}};
\draw[->] (M2) -- (M1) node[midway, above] {\tiny{$l-1$}};
\draw[->] (M2) -- (T2) node[at end, above] {\tiny{$m$}};
\draw[->] (M1) -- (T1) node[at end, above] {\tiny{$l$}};
}}}}}

\newcommand{\dottedsquare}{
\coordinate (B1) at (-0.9,0);
\coordinate (B2) at (0.9,0);
\coordinate (C1) at (-0.9,1);
\coordinate (D1) at (-0.9,2);
\coordinate (C2) at (0.9,1);
\coordinate (D2) at (0.9,2);
\coordinate (T1) at (-0.9,3);
\coordinate (T2) at (0.9,3);
\coordinate (B) at (0, -1);
\draw[dotted] (B1) -- (T1);
\draw[dotted] (B2) -- (T2);
\draw[dotted] (C1) -- (C2);
\draw[dotted] (D1) -- (D2);
\draw[white, opacity=0] (0, -0.5) -- (0, 3.5);
}

\newcommand{\twovertupdown}{\ensuremath{\vcenter{\hbox{\tikz[scale=0.2]{
\dottedsquare
\draw[->] (B1) -- (T1);
\draw[->] (T2) -- (B2);
\node at (B) {$0$};
}}}}}

\newcommand{\twovertdowndown}{\ensuremath{\vcenter{\hbox{\tikz[scale=0.2]{
\dottedsquare
\draw[<-] (B1) -- (T1);
\draw[->] (T2) -- (B2);
\node at (B) {$0$};
}}}}}

\newcommand{\twovertupup}{\ensuremath{\vcenter{\hbox{\tikz[scale=0.2]{
\dottedsquare
\draw[->] (B1) -- (T1);
\draw[<-] (T2) -- (B2);
\node at (B) {$0$};
}}}}}

\newcommand{\twovertupupNAME}[1]{\ensuremath{\vcenter{\hbox{\tikz[scale=0.3]{
\dottedsquare
\draw[->] (B1) -- (T1);
\draw[<-] (T2) -- (B2);
\node at (B) {$#1$};
}}}}}

\newcommand{\twovertdownup}{\ensuremath{\vcenter{\hbox{\tikz[scale=0.2]{
\dottedsquare
\draw[->] (B2) -- (T2);
\draw[->] (T1) -- (B1);
\node at (B) {$0$};
}}}}}

\newcommand{\onevertemptyup}{\ensuremath{\vcenter{\hbox{\tikz[scale=0.2]{
\dottedsquare
\draw[<-] (T2) -- (B2);
\node at (B) {$0$};
}}}}}

\newcommand{\onevertemptyupNAME}[1]{\ensuremath{\vcenter{\hbox{\tikz[scale=0.3]{
\dottedsquare
\draw[<-] (T2) -- (B2);
\node at (B) {$#1$};
}}}}}

\newcommand{\onevertemptydown}{\ensuremath{\vcenter{\hbox{\tikz[scale=0.2]{
\dottedsquare
\draw[->] (T2) -- (B2);
\node at (B) {$0$};
}}}}}

\newcommand{\onevertupempty}{\ensuremath{\vcenter{\hbox{\tikz[scale=0.2]{
\dottedsquare
\draw[<-] (T1) -- (B1);
\node at (B) {$0$};
}}}}}

\newcommand{\onevertupemptyNAME}[1]{\ensuremath{\vcenter{\hbox{\tikz[scale=0.3]{
\dottedsquare
\draw[<-] (T1) -- (B1);
\node at (B) {$#1$};
}}}}}

\newcommand{\onevertdownempty}{\ensuremath{\vcenter{\hbox{\tikz[scale=0.2]{
\dottedsquare
\draw[->] (T1) -- (B1);
\node at (B) {$0$};
}}}}}

\newcommand{\emptysquare}{\ensuremath{\vcenter{\hbox{\tikz[scale=0.2]{
\dottedsquare
\node at (B) {$0$};
}}}}}

\newcommand{\emptysquareNAME}[1]{\ensuremath{\vcenter{\hbox{\tikz[scale=0.3]{
\dottedsquare
\node at (B) {$#1$};
}}}}}

\newcommand{\twohorleftright}{\ensuremath{\vcenter{\hbox{\tikz[scale=0.2]{
\dottedsquare
\draw[->] (T2) -- (D2)-- (D1) -- (T1);
\draw[->] (B1) -- (C1)-- (C2) -- (B2);
\node at (B) {$-1$};
}}}}}

\newcommand{\twohorrightleft}{\ensuremath{\vcenter{\hbox{\tikz[scale=0.2]{
\dottedsquare
\draw[<-] (T2) -- (D2)-- (D1) -- (T1);
\draw[<-] (B1) -- (C1)-- (C2) -- (B2);
\node at (B) {$+1$};
}}}}}

\newcommand{\twohorleftleft}{\ensuremath{\vcenter{\hbox{\tikz[scale=0.2]{
\dottedsquare
\draw[->] (T2) -- (D2)-- (D1) -- (T1);
\draw[<-] (B1) -- (C1)-- (C2) -- (B2);
\node at (B) {$0$};
}}}}}

\newcommand{\twohorrightright}{\ensuremath{\vcenter{\hbox{\tikz[scale=0.2]{
\dottedsquare
\draw[<-] (T2) -- (D2)-- (D1) -- (T1);
\draw[->] (B1) -- (C1)-- (C2) -- (B2);
\node at (B) {$0$};
}}}}}

\newcommand{\onehordeeprightempty}{\ensuremath{\vcenter{\hbox{\tikz[scale=0.2]{
\dottedsquare
\draw[<-] (T2) -- (C2)-- (C1) -- (T1);
\node at (B) {$+\frac12$};
}}}}}

\newcommand{\onehordeepleftempty}{\ensuremath{\vcenter{\hbox{\tikz[scale=0.2]{
\dottedsquare
\draw[->] (T2) -- (C2)-- (C1) -- (T1);
\node at (B) {$-\frac12$};
}}}}}

\newcommand{\onehordeepemptyleft}{\ensuremath{\vcenter{\hbox{\tikz[scale=0.2]{
\dottedsquare
\draw[->] (B2) -- (D2)-- (D1) -- (B1);
\node at (B) {$+\frac12$};
}}}}}

\newcommand{\onehordeepemptyright}{\ensuremath{\vcenter{\hbox{\tikz[scale=0.2]{
\dottedsquare
\draw[<-] (B2) -- (D2)-- (D1) -- (B1);
\node at (B) {$-\frac12$};
}}}}}


\newcommand{\onehorrightempty}{\ensuremath{\vcenter{\hbox{\tikz[scale=0.2]{
\dottedsquare
\draw[<-] (T2) -- (D2)-- (D1) -- (T1);
\node at (B) {$+\frac12$};
}}}}}

\newcommand{\onehorleftempty}{\ensuremath{\vcenter{\hbox{\tikz[scale=0.2]{
\dottedsquare
\draw[->] (T2) -- (D2)-- (D1) -- (T1);
\node at (B) {$-\frac12$};
}}}}}

\newcommand{\onehoremptyleft}{\ensuremath{\vcenter{\hbox{\tikz[scale=0.2]{
\dottedsquare
\draw[->] (B2) -- (C2)-- (C1) -- (B1);
\node at (B) {$+\frac12$};
}}}}}

\newcommand{\onehoremptyright}{\ensuremath{\vcenter{\hbox{\tikz[scale=0.2]{
\dottedsquare
\draw[<-] (B2) -- (C2)-- (C1) -- (B1);
\node at (B) {$-\frac12$};
}}}}}

\newcommand{\bottomlefttoprightlow}{\ensuremath{\vcenter{\hbox{\tikz[scale=0.2]{
\dottedsquare
\draw[->] (B1) -- (C1)-- (C2) -- (T2);
\node at (B) {$0$};
}}}}}

\newcommand{\bottomlefttoprightlowNAME}[1]{\ensuremath{\vcenter{\hbox{\tikz[scale=0.3]{
\dottedsquare
\draw[->] (B1) -- (C1)-- (C2) -- (T2);
\node at (B) {$#1$};
}}}}}

\newcommand{\bottomlefttoprighthigh}{\ensuremath{\vcenter{\hbox{\tikz[scale=0.2]{
\dottedsquare
\draw[->] (B1) -- (D1)-- (D2) -- (T2);
\node at (B) {$0$};
}}}}}

\newcommand{\bottomlefttoprighthighNAME}[1]{\ensuremath{\vcenter{\hbox{\tikz[scale=0.3]{
\dottedsquare
\draw[->] (B1) -- (D1)-- (D2) -- (T2);
\node at (B) {$#1$};
}}}}}

\newcommand{\bottomrighttopleftlow}{\ensuremath{\vcenter{\hbox{\tikz[scale=0.2]{
\dottedsquare
\draw[->] (B2) -- (C2)-- (C1) -- (T1);
\node at (B) {$0$};
}}}}}

\newcommand{\bottomrighttopleftlowNAME}[1]{\ensuremath{\vcenter{\hbox{\tikz[scale=0.3]{
\dottedsquare
\draw[->] (B2) -- (C2)-- (C1) -- (T1);
\node at (B) {$#1$};
}}}}}

\newcommand{\bottomrighttoplefthigh}{\ensuremath{\vcenter{\hbox{\tikz[scale=0.2]{
\dottedsquare
\draw[->] (B2) -- (D2)-- (D1) -- (T1);
\node at (B) {$0$};
}}}}}

\newcommand{\bottomrighttoplefthighNAME}[1]{\ensuremath{\vcenter{\hbox{\tikz[scale=0.3]{
\dottedsquare
\draw[->] (B2) -- (D2)-- (D1) -- (T1);
\node at (B) {$#1$};
}}}}}

\newcommand{\toprightbottomleftlow}{\ensuremath{\vcenter{\hbox{\tikz[scale=0.2]{
\dottedsquare
\draw[<-] (B1) -- (C1)-- (C2) -- (T2);
\node at (B) {$0$};
}}}}}

\newcommand{\toprightbottomlefthigh}{\ensuremath{\vcenter{\hbox{\tikz[scale=0.2]{
\dottedsquare
\draw[<-] (B1) -- (D1)-- (D2) -- (T2);
\node at (B) {$0$};
}}}}}

\newcommand{\topleftbottomrightlow}{\ensuremath{\vcenter{\hbox{\tikz[scale=0.2]{
\dottedsquare
\draw[<-] (B2) -- (C2)-- (C1) -- (T1);
\node at (B) {$0$};
}}}}}

\newcommand{\topleftbottomrighthigh}{\ensuremath{\vcenter{\hbox{\tikz[scale=0.2]{
\dottedsquare
\draw[<-] (B2) -- (D2)-- (D1) -- (T1);
\node at (B) {$0$};
}}}}}

\newcommand{\zigzagleftdown}{\ensuremath{\vcenter{\hbox{\tikz[scale=0.2]{
\dottedsquare
\draw[->] (T1) -- (D1)-- (D2) -- (C2) -- (C1) -- (B1);
\node at (B) {$0$};
}}}}}

\newcommand{\zigzagleftup}{\ensuremath{\vcenter{\hbox{\tikz[scale=0.2]{
\dottedsquare
\draw[<-] (T1) -- (D1)-- (D2) -- (C2) -- (C1) -- (B1);
\node at (B) {$0$};
}}}}}

\newcommand{\zigzagleftupNAME}[1]{\ensuremath{\vcenter{\hbox{\tikz[scale=0.3]{
\dottedsquare
\draw[<-] (T1) -- (D1)-- (D2) -- (C2) -- (C1) -- (B1);
\node at (B) {$#1$};
}}}}}

\newcommand{\zigzagrightdown}{\ensuremath{\vcenter{\hbox{\tikz[scale=0.2]{
\dottedsquare
\draw[->] (T2) -- (D2)-- (D1) -- (C1) -- (C2) -- (B2);
\node at (B) {$0$};
}}}}}

\newcommand{\zigzagrightup}{\ensuremath{\vcenter{\hbox{\tikz[scale=0.2]{
\dottedsquare
\draw[<-] (T2) -- (D2)-- (D1) -- (C1) -- (C2) -- (B2);
\node at (B) {$0$};
}}}}}

\newcommand{\zigzagrightupNAME}[1]{\ensuremath{\vcenter{\hbox{\tikz[scale=0.3]{
\dottedsquare
\draw[<-] (T2) -- (D2)-- (D1) -- (C1) -- (C2) -- (B2);
\node at (B) {$#1$};
}}}}}

\newcommand{\squaremiddlepos}{\ensuremath{\vcenter{\hbox{\tikz[scale=0.2]{
\dottedsquare
\draw[->] (D2)-- (D1) -- (C1);
\draw[->] (C1)-- (C2) -- (D2);
\node at (B) {$+1$};
}}}}}

\newcommand{\squaremiddleneg}{\ensuremath{\vcenter{\hbox{\tikz[scale=0.2]{
\dottedsquare
\draw[<-] (D2)-- (D1) -- (C1);
\draw[<-] (C1)-- (C2) -- (D2);
\node at (B) {$-1$};
}}}}}

\newcommand{\bigX}{\ensuremath{\vcenter{\hbox{\tikz[scale=0.18]{
\draw[very thick] (-1,1) -- (1, -1);
\draw[very thick] (1,1) -- (-1, -1);
}}}}}

\newcommand{\dotteddoubleY}{
\coordinate (B1) at (-0.9,0);
\coordinate (T1) at (-0.9,3);
\coordinate (C) at (0,1);
\coordinate (D) at (0,2);
\coordinate (B2) at (0.9,0);
\coordinate (T2) at (0.9,3);
\coordinate (B) at (0,-1);
\draw[white] (0, -0.5) -- (0, 3.5);
\draw[dotted] (B1) -- (C);
\draw[dotted] (C) -- (B2);
\draw[dotted] (D) -- (C);
\draw[dotted] (T2) -- (D);
\draw[dotted] (D) -- (T1);
}

\newcommand{\Yonevertupempty}{\ensuremath{\vcenter{\hbox{\tikz[scale=0.2]{
\dotteddoubleY
\draw[->] (B1)-- (C) -- (D) -- (T1);
\node at (B) {$0$};
}}}}}

\newcommand{\Yonevertdownempty}{\ensuremath{\vcenter{\hbox{\tikz[scale=0.2]{
\dotteddoubleY
\draw[<-] (B1)-- (C) -- (D) -- (T1);
\node at (B) {$0$};
}}}}}

\newcommand{\Ytwohorleftright}{\ensuremath{\vcenter{\hbox{\tikz[scale=0.2]{
\dotteddoubleY
\draw[->] (T2) -- (D) -- (T1);
\draw[->] (B1) -- (C) -- (B2);
\node at (B) {$-1$};
}}}}}

\newcommand{\Ytwohorrightleft}{\ensuremath{\vcenter{\hbox{\tikz[scale=0.2]{
\dotteddoubleY
\draw[<-] (T2) -- (D) -- (T1);
\draw[<-] (B1) -- (C) -- (B2);
\node at (B) {$+1$};
}}}}}

\newcommand{\Ytwohorleftleft}{\ensuremath{\vcenter{\hbox{\tikz[scale=0.2]{
\dotteddoubleY
\draw[->] (T2) -- (D) -- (T1);
\draw[<-] (B1) -- (C) -- (B2);
\node at (B) {$0$};
}}}}}

\newcommand{\Ytwohorrightright}{\ensuremath{\vcenter{\hbox{\tikz[scale=0.2]{
\dotteddoubleY
\draw[<-] (T2) -- (D) -- (T1);
\draw[->] (B1) -- (C) -- (B2);
\node at (B) {$0$};
}}}}}

\newcommand{\Yonehoremptyright}{\ensuremath{\vcenter{\hbox{\tikz[scale=0.2]{
\dotteddoubleY
\draw[->] (B1) -- (C) -- (B2);
\node at (B) {$-\frac12$};
}}}}}

\newcommand{\Yonehoremptyleft}{\ensuremath{\vcenter{\hbox{\tikz[scale=0.2]{
\dotteddoubleY
\draw[<-] (B1) -- (C) -- (B2);
\node at (B) {$+\frac12$};
}}}}}

\newcommand{\Yonehorrightempty}{\ensuremath{\vcenter{\hbox{\tikz[scale=0.2]{
\dotteddoubleY
\draw[->] (T1) -- (D) -- (T2);
\node at (B) {$+\frac12$};
}}}}}

\newcommand{\Yonehorleftempty}{\ensuremath{\vcenter{\hbox{\tikz[scale=0.2]{
\dotteddoubleY
\draw[->] (T2) -- (D) -- (T1);
\node at (B) {$-\frac12$};
}}}}}

\newcommand{\Yonevertemptyup}{\ensuremath{\vcenter{\hbox{\tikz[scale=0.2]{
\dotteddoubleY
\draw[->] (B2)-- (C) -- (D) -- (T2);
\node at (B) {$0$};
}}}}}

\newcommand{\Yonevertemptydown}{\ensuremath{\vcenter{\hbox{\tikz[scale=0.2]{
\dotteddoubleY
\draw[<-] (B2)-- (C) -- (D) -- (T2);
\node at (B) {$0$};
}}}}}

\newcommand{\Yempty}{\ensuremath{\vcenter{\hbox{\tikz[scale=0.2]{
\dotteddoubleY
\node at (B) {$0$};
}}}}}

\newcommand{\Ybottomrighttopleft}{\ensuremath{\vcenter{\hbox{\tikz[scale=0.2]{
\dotteddoubleY
\draw[->] (B2)-- (C) -- (D) -- (T1);
\node at (B) {$0$};
}}}}}

\newcommand{\Ybottomlefttopright}{\ensuremath{\vcenter{\hbox{\tikz[scale=0.2]{
\dotteddoubleY
\draw[->] (B1)-- (C) -- (D) -- (T2);
\node at (B) {$0$};
}}}}}

\newcommand{\Ytopleftbottomright}{\ensuremath{\vcenter{\hbox{\tikz[scale=0.2]{
\dotteddoubleY
\draw[<-] (B2)-- (C) -- (D) -- (T1);
\node at (B) {$0$};
}}}}}

\newcommand{\Ytoprightbottomleft}{\ensuremath{\vcenter{\hbox{\tikz[scale=0.2]{
\dotteddoubleY
\draw[<-] (B1)-- (C) -- (D) -- (T2);
\node at (B) {$0$};
}}}}}

\newcommand{\dottedII}{
\coordinate (B1) at (-1,0);
\coordinate (T1) at (-1,3);
\coordinate (B2) at (1,0);
\coordinate (T2) at (1,3);
\coordinate (B) at (0,-1);
\draw[white] (0, -0.5) -- (0, 3.5);
\draw[dotted] (B1) -- (T1);
\draw[dotted] (B2) -- (T2);
}

\newcommand{\IItwovertupup}{\ensuremath{\vcenter{\hbox{\tikz[scale=0.2]{
\dottedII
\draw[->] (B2)-- (T2);
\draw[->] (B1)-- (T1);
\node at (B) {$0$};
}}}}}

\newcommand{\IItwovertdownup}{\ensuremath{\vcenter{\hbox{\tikz[scale=0.2]{
\dottedII
\draw[<-] (B1)-- (T1);
\draw[->] (B2)-- (T2);
\node at (B) {$0$};
}}}}}

\newcommand{\IItwovertupdown}{\ensuremath{\vcenter{\hbox{\tikz[scale=0.2]{
\dottedII
\draw[->] (B1)-- (T1);
\draw[<-] (B2)-- (T2);
\node at (B) {$0$};
}}}}}

\newcommand{\IItwovertdowndown}{\ensuremath{\vcenter{\hbox{\tikz[scale=0.2]{
\dottedII
\draw[<-] (B1)-- (T1);
\draw[<-] (B2)-- (T2);
\node at (B) {$0$};
}}}}}

\newcommand{\IIonevertupempty}{\ensuremath{\vcenter{\hbox{\tikz[scale=0.2]{
\dottedII
\draw[->] (B1)-- (T1);
\node at (B) {$0$};
}}}}}

\newcommand{\IIonevertdownempty}{\ensuremath{\vcenter{\hbox{\tikz[scale=0.2]{
\dottedII
\draw[<-] (B1)-- (T1);
\node at (B) {$0$};
}}}}}

\newcommand{\IIonevertemptydown}{\ensuremath{\vcenter{\hbox{\tikz[scale=0.2]{
\dottedII
\draw[<-] (B2)-- (T2);
\node at (B) {$0$};
}}}}}

\newcommand{\IIonevertemptyup}{\ensuremath{\vcenter{\hbox{\tikz[scale=0.2]{
\dottedII
\draw[->] (B2)-- (T2);
\node at (B) {$0$};
}}}}}

\newcommand{\IIempty}{\ensuremath{\vcenter{\hbox{\tikz[scale=0.2]{
\dottedII
\node at (B) {$0$};
}}}}}

\newcommand{\dottedH}{
\coordinate (B1) at (-1,0);
\coordinate (T1) at (-1,3);
\coordinate (B2) at (1,0);
\coordinate (T2) at (1,3);
\coordinate (M1) at (-1,1.5);
\coordinate (M2) at (1,1.5);
\coordinate (B) at (0,-1);
\draw[white] (0, -0.5) -- (0, 3.5);
\draw[dotted] (B1) -- (T1);
\draw[dotted] (B2) -- (T2);
\draw[dotted] (M2) -- (M1);
}

\newcommand{\Hempty}{\ensuremath{\vcenter{\hbox{\tikz[scale=0.2]{
\dottedH
\node at (B) {$0$};
}}}}}

\newcommand{\Honevertemptyup}{\ensuremath{\vcenter{\hbox{\tikz[scale=0.2]{
\dottedH
\draw[->] (B2) -- (T2);
\node at (B) {$0$};
}}}}}

\newcommand{\Honevertemptydown}{\ensuremath{\vcenter{\hbox{\tikz[scale=0.2]{
\dottedH
\draw[<-] (B2) -- (T2);
\node at (B) {$0$};
}}}}}

\newcommand{\Honevertupempty}{\ensuremath{\vcenter{\hbox{\tikz[scale=0.2]{
\dottedH
\draw[->] (B1) -- (T1);
\node at (B) {$0$};
}}}}}

\newcommand{\Honevertdownempty}{\ensuremath{\vcenter{\hbox{\tikz[scale=0.2]{
\dottedH
\draw[<-] (B1) -- (T1);
\node at (B) {$0$};
}}}}}

\newcommand{\Htwovertupup}{\ensuremath{\vcenter{\hbox{\tikz[scale=0.2]{
\dottedH
\draw[->] (B1) -- (T1);
\draw[->] (B2) -- (T2);
\node at (B) {$0$};
}}}}}

\newcommand{\Htwovertupdown}{\ensuremath{\vcenter{\hbox{\tikz[scale=0.2]{
\dottedH
\draw[->] (B1) -- (T1);
\draw[<-] (B2) -- (T2);
\node at (B) {$0$};
}}}}}

\newcommand{\Htwovertdownup}{\ensuremath{\vcenter{\hbox{\tikz[scale=0.2]{
\dottedH
\draw[<-] (B1) -- (T1);
\draw[->] (B2) -- (T2);
\node at (B) {$0$};
}}}}}

\newcommand{\Htwovertdowndown}{\ensuremath{\vcenter{\hbox{\tikz[scale=0.2]{
\dottedH
\draw[<-] (B1) -- (T1);
\draw[<-] (B2) -- (T2);
\node at (B) {$0$};
}}}}}

\newcommand{\Hbottomlefttopright}{\ensuremath{\vcenter{\hbox{\tikz[scale=0.2]{
\dottedH
\draw[->] (B1) -- (M1)-- (M2) -- (T2);
\node at (B) {$0$};
}}}}}

\newcommand{\Htoprightbottomleft}{\ensuremath{\vcenter{\hbox{\tikz[scale=0.2]{
\dottedH
\draw[<-] (B1) -- (M1)-- (M2) -- (T2);
\node at (B) {$0$};
}}}}}

\newcommand{\Hbottomrighttopleft}{\ensuremath{\vcenter{\hbox{\tikz[scale=0.2]{
\dottedH
\draw[->] (B2) -- (M2)-- (M1) -- (T1);
\node at (B) {$0$};
}}}}}

\newcommand{\Htopleftbottomright}{\ensuremath{\vcenter{\hbox{\tikz[scale=0.2]{
\dottedH
\draw[<-] (B2) -- (M2)-- (M1) -- (T1);
\node at (B) {$0$};
}}}}}

\newcommand{\Honehorleftempty}{\ensuremath{\vcenter{\hbox{\tikz[scale=0.2]{
\dottedH
\draw[->] (T2) -- (M2)-- (M1) -- (T1);
\node at (B) {$-\frac12$};
}}}}}

\newcommand{\Honehorrightempty}{\ensuremath{\vcenter{\hbox{\tikz[scale=0.2]{
\dottedH
\draw[<-] (T2) -- (M2)-- (M1) -- (T1);
\node at (B) {$+\frac12$};
}}}}}

\newcommand{\Honehoremptyleft}{\ensuremath{\vcenter{\hbox{\tikz[scale=0.2]{
\dottedH
\draw[->] (B2) -- (M2)-- (M1) -- (B1);
\node at (B) {$+\frac12$};
}}}}}

\newcommand{\Honehoremptyright}{\ensuremath{\vcenter{\hbox{\tikz[scale=0.2]{
\dottedH
\draw[<-] (B2) -- (M2)-- (M1) -- (B1);
\node at (B) {$-\frac12$};
}}}}}

\keywords{MOY Graphs; sln-invariants; Graph colorings; Skein relations.}
\subjclass[2010]{05C15; 06A07; 57M27; 20G42}
\begin{abstract}
This paper is concerned with evaluation of $\sll_N$-webs from a graph-theoretical point of view: We give an interpretation of the evaluation of $\sll_N$ webs in terms of colorings. This is very close from the approach of Cautis, Kamnitzer and Morrison, but we provide a non-local and algebra-free definition of the degree associated with a coloring. In particular we do not use skew Howe duality. As a counter-part we are only concerned with closed webs. We prove that this new evaluation coincides with the classical evaluation of MOY graphs by checking some skein relations. As a consequence, we prove a formula which relates the $\sll_N$ and $\sll_{N-1}$-evaluations of MOY graphs.
\end{abstract}
\maketitle
\setcounter{tocdepth}{1}
\tableofcontents
\allowdisplaybreaks
\section{Introduction}
\label{sec:introduction}

MOY graphs and MOY graph evaluation have been introduced by Murakami Ohtsuki and Yamada~\cite{MR1659228} to provide a combinatorial and computational approach to the $\sll_N$-invariant of links~\cite{MR1036112}. The edges of these graphs are meant to represent some wedge powers of the fundamental representation of the Hopf algebra $U_q(\sll_N)$ and the vertices correspond to some intertwiners. A MOY graph can therefore be interpreted as an endomorphism of $\CC(q)$ the trivial $U_q(\sll_N)$-module. The evaluation of a MOY graph is the image of 1 under this endomorphism. 

Murakami, Ohtsuki and Yamada gave a combinatorial way to evaluate MOY graphs using colorings and state sums and gave some skein relations satisfied by the evaluation.  Kim \cite{2003Kim} and Morrison \cite{2007arXiv0704.1503M} conjectured that (a Karoubi-completion of) the category of MOY graphs is equivalent to that of finite dimensional $U_q(\sll_N)$-modules. This has been proved by Cautis, Kamnitzer and Morrison \cite{MR3263166}. 

The combinatorial evaluation of MOY graphs has been used to study the categorification of the $\sll_N$-invariant (see for example \cite{MR2391017,MR2421131,pre06302580,MR2491657,MR3190356,queffelec2014mathfrak, MR3263166}).  

In this paper, we give an alternative definition of the evaluation of MOY graphs. This new definition is very close from that of \cite{MR3263166}: just like Cautis, Kamnitzer and Morrison, we count all possible colorings of a MOY graphs taking into account a certain degree. As we only work with closed MOY graphs, many simplifications occur, giving us a global definition of the degree (this implies in particular that we do not need their \emph{tags}). 
In other word, this paper can be seen as a combinatorial rewriting of part of \cite{MR3263166} in the case of closed MOY graphs.




From this new evaluation, one can deduce a skein relation which relates $\sll_N$ and $\sll_{N-1}$-evaluations of MOY graphs (this formula can be as well derived from the diagrammatic description of the Gel'fand-Tsetlin functor in the PhD thesis of Morrison \cite[Chapter 4]{2007arXiv0704.1503M}).

We hope that our fully-combinatorial approach to the evaluation of closed MOY graphs can be applied to foam-theoretic categorifications of the $\sll_N$-invariant. In particular, we think that it could helpful to get rid of the ``ladder'' formalism used, see for example \cite{queffelec2014mathfrak}.  

\subsection*{Organization of the paper}
\label{sec:organisation-paper}

In section~\ref{sec:eval-sll_n-webs}, we define our evaluation $\kupcs{\cdot}$ of $\sll_N$-webs and we state in Theorem~\ref{thm:evalcol} that our evaluation agrees with the one defined in \cite{MR1659228}. We explain that in order to prove the theorem, it is enough to show that $\kupcs{\cdot}$ satisfies some skein relations.

In section~\ref{sec:part-q-ident}, we introduce degrees of partitions of ordered sets and show a relatively technical lemma about this notion which is the key point of the proof of Theorem~\ref{thm:evalcol}.

In section~\ref{sec:check-skein-relat}, we prove that $\kupcs{\cdot}$ satisfies the skein relations. Finally in section~\ref{sec:new-kind-skein}, we state and prove Proposition~\ref{prop:newskein} which related the $\sll_N$ evaluation and the $\sll_{N-1}$ evaluation of MOY graphs.

 \subsection*{Acknowledgement}
 \label{sec:acknoledgement}
 The author would like to thank Christian Blanchet, Ben Elias and Jan Priel for showing interest in this work and for their helpful comments.

\section{Evaluation of $\sll_N$-webs}
\label{sec:eval-sll_n-webs}

\newcommand{\cN}{\ensuremath{\llbracket 1, N\rrbracket}}
\newcommand{\PP}{\ensuremath{\mathfrak{P}}}
\begin{dfn}
  \label{dfn:MOYGraph}
Let $N$ be a positive integer. An \emph{$\sll_N$-web} or simply \emph{web} is an oriented, trivalent, plane graph $\Gamma = (V,E)$ with possibly some vertex-less loops, whose edges are labeled with elements of $\ZZ$\footnote{Actually, if the label of an edge is not in $\llbracket 0, N\rrbracket := \{0,1, \dots, N\}$, the web will be pretty uninteresting. However, for technical reasons, it is convenient to allow labelling by all relative integers.}, such that for every vertex $v$ of $\Gamma$,  we have:
\[
\sum_{v \stackrel{e}{\longrightarrow} \bullet} \lambda(e) =  \sum_{v \stackrel{e}{\longleftarrow} \bullet} \lambda(e) \mod N,
\]
where $\lambda: E \to \cN$ is the labelling of the edges. 
Two $\sll_N$-webs $\Gamma_1$ and $\Gamma_2$ are considered to be equivalent if one can obtain one from the other by reversing the orientations of some edges $(e)_{e\in E'}$ and replacing their labels by $N- \lambda(e)$.
\end{dfn}

\begin{dfn} \label{dfn:MOYGraph2}
  A \emph{MOY graph} is a trivalent oriented plane graph $\Gamma$ with possibly some vertex-less loops, whose edges are labeled with elements of $\NN$, such that for every vertex the labels and the orientations look locally like:
\[
\tikz{
\draw[->] (0,0) -- (0,0.5) node [at end, above] {$a+b$};  
\draw[>-] (-0.5, -0.5) -- (0,0) node [at start, below] {$a$};  
\draw[>-] (+0.5, -0.5) -- (0,0) node [at start, below] {$b$};  
\node at (2.5,0) {or};
\begin{scope}[xshift = 5cm]
  \draw[-<] (0,0) -- (0,0.5) node [at end, above] {$a+b$};  
\draw[<-] (-0.5, -0.5) -- (0,0) node [at start, below] {$a$};  
\draw[<-] (+0.5, -0.5) -- (0,0) node [at start, below] {$b$};  
\end{scope}
}
\]
If $\Gamma$ is a MOY graph, the \emph{writhe of $\Gamma$}, denoted by $w(\Gamma)$, is the algebraic number of circles obtained, when one replaces every edge with label $i$ of $\Gamma$ by $i$ parallel copies (see Figure~\ref{fig:writhe}).
\begin{figure}[ht!]
  \centering
  \begin{tikzpicture}[yscale = 0.4]
    \begin{scope}
  \coordinate (A) at (0,0);
  \coordinate (B) at (0,2);
  \coordinate (C) at (0,4);
  \coordinate (D) at (0,6);
  \node at (0, -1.5) {$\Gamma$};
  \draw[->] (A) -- (B) node[midway, left] {$4$};
  \draw[->] (C) -- (D) node[midway, left] {$4$};
  \draw[->] (B) .. controls +(0.5,0.5) and +(0.5,-0.5) .. (C) node[midway, left] {$3$};
  \draw[->] (B) .. controls +(-0.5,0.5) and +(-0.5,-0.5) .. (C) node[midway, left] {$1$};
  \draw[->] (D) .. controls +(1.5,1.5) and +(1.5,-1.5) .. (A) node[midway, left] {$2$};
  \draw[->] (D) .. controls +(-1.5,1.5) and +(-1.5,-1.5) .. (A) node[midway, left] {$2$};
\end{scope}

\begin{scope}[xshift = 5cm]
  \coordinate (A1) at (-0.15,0);
  \coordinate (A2) at (-0.05,0);
  \coordinate (A4) at (+0.15,0);
  \coordinate (A3) at (+0.05,0);
  \coordinate (B1) at (-0.15,2);
  \coordinate (B2) at (-0.05,2);
  \coordinate (B4) at (+0.15,2);
  \coordinate (B3) at (+0.05,2);
  \coordinate (C1) at (-0.15,4);
  \coordinate (C2) at (-0.05,4);
  \coordinate (C4) at (+0.15,4);
  \coordinate (C3) at (+0.05,4);
  \coordinate (D1) at (-0.15,6);
  \coordinate (D2) at (-0.05,6);
  \coordinate (D4) at (+0.15,6);
  \coordinate (D3) at (+0.05,6);
  \draw[->] (A1) -- (B1);
  \draw[->] (A2) -- (B2);
  \draw[->] (A3) -- (B3);
  \draw[->] (A4) -- (B4);
  \draw[->] (C1) -- (D1);
  \draw[->] (C2) -- (D2);
  \draw[->] (C3) -- (D3);
  \draw[->] (C4) -- (D4);
  \draw[->] (B1) .. controls +(-0.5,0.5) and +(-0.5,-0.5) .. (C1);
  \draw[->] (B2) .. controls +(-0.5,0.5) and +(-0.5,-0.5) .. (C2);
  \draw[->] (B3) .. controls +(-0.5,0.5) and +(-0.5,-0.5) .. (C3);
  \draw[->] (B4) .. controls +(0.5,0.5) and +(0.5,-0.5) .. (C4); 
   \draw[->] (D3) .. controls +(1.65,1.9) and +(1.65,-1.9) .. (A3);
   \draw[->] (D4) .. controls +(1.4,1.3) and +(1.4,-1.3) .. (A4);
   \draw[->] (D1) .. controls +(-1.4,1.3) and +(-1.4,-1.3) .. (A1);
   \draw[->] (D2) .. controls +(-1.65,1.9) and +(-1.65,-1.9) .. (A2);
\end{scope}
  \end{tikzpicture}
  \caption{Computation of the writhe of a MOY graph: here we have $w(\Gamma)=2-2 =0$}
  \label{fig:writhe}
\end{figure}
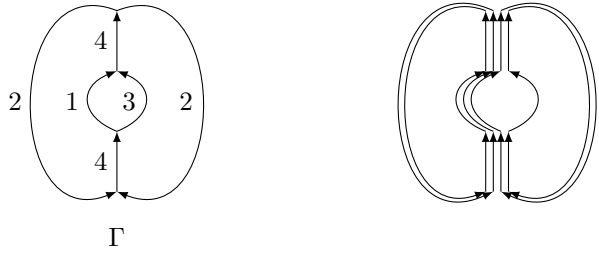
\end{dfn}

\begin{dfn}
  \label{dfn:colofwebs}
  A \emph{coloring} of a web $\Gamma = (V,E)$ is a function $c$  from $E$ to $\PP(\cN)$  such that:
  \begin{enumerate}
  \item[(E)] for every edge $e$, $\#c(e)= \lambda (e)$,
  \item[(V)] for every vertex $v$, the multiset:
\[
\bigcup_{v \stackrel{e}{\longrightarrow} \bullet} c(e) \cup   \bigcup_{v \stackrel{e}{\longleftarrow} \bullet}  \overline{c(e)}
 \]  
is a multiple of $\cN$, where $ \overline{c(e)}$ is the complement of $c(e)$ in $\cN$.
  \end{enumerate}
If two webs $\Gamma_1$ and $\Gamma_2$ are equivalent and $c_1$ is a coloring of $\Gamma_1$, there is a canonical coloring of $\Gamma_2$ obtained by replacing $c_1(e)$ by its complement in $\cN$ for every edge $e$ whose orientation has been reversed. 
\end{dfn}

\begin{rmk}\label{rmk:flow}
  \begin{enumerate}
  \item If some labels of an $\sll_N$-web $\Gamma$ are not in $\llbracket 0,N\rrbracket$, then the web $\Gamma$ admit no coloring.
\item If $\Gamma$ is a MOY graph (and hence can be thought of as an $\sll_N$-web for all $N$), the condition (V) of a coloring is equivalent to saying that around each vertices, the colors of the two edges with the smallest labels form a partition of the color of the edge with the greatest label. Therefore, for each element $i$ in $\cN$, the flow of $i$ is preserved around each vertex and one can see the coloring of $\Gamma$ as a collection of connected cycles colored by element of $\cN$ such that:
  \begin{itemize}
  \item any two cycles with the same colors are disjoint,
  \item an edge $e$ belongs to exactly $\lambda(e)$ cycles.
  \end{itemize}
  \end{enumerate}
\end{rmk}

\begin{dfn}
    \label{dfn:state}
    A \emph{bicolor} $b$ is a subset of $\cN$ with exactly two elements. The greatest color (for the natural order on $\cN$) is denoted by $b^+$, the other one by $b^-$. 
    If $(\Gamma,c)$ is a colored web and $b$ is a bicolor, the \emph{state} $(\Gamma,c)_b$ is the collection of oriented circles obtained from $\Gamma$ by erasing every edge $e$ such that the cardinal of the intersection of $c(e)$ and $b$ is different from $1$ and reversing the orientation of every edge $e$ such that $\lambda(e)\cap b = \{b^-\}$. The \emph{degree} $d(\Gamma,c)_b$ of a state $(\Gamma,c)_b$ is equal to the algebraic number of circles in $(\Gamma,c)_b$. The degree $d(\Gamma,c)$ of a colored web $(\Gamma,c)$ is the sum of the degree of all the possible states.  
  \end{dfn}
\begin{thm}
  \label{thm:evalcol}
  Let $\Gamma$ be a web, the evaluation $\kups{\Gamma}$ of $\Gamma$ given in \cite{MR1659228} is equal to:
\[
\kupcs{\Gamma} := \sum_{c \textrm{ coloring of } \Gamma} q^{d(\Gamma,c)}.
\]
\end{thm}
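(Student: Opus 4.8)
The plan is to exploit the fact, established by Murakami, Ohtsuki and Yamada in \cite{MR1659228}, that the classical evaluation $\kups{\cdot}$ is the \emph{unique} $\ZZ[q,q^{-1}]$-valued invariant of closed webs pinned down by a short list of local skein relations: the value of a single circle labelled $k$, the digon (bigon) reduction, and the square/associativity relations. Since these relations suffice to reduce any closed web to a Laurent polynomial, it is enough to verify that $\kupcs{\cdot}$ (i) is invariant under the web-equivalence of Definition~\ref{dfn:MOYGraph} and (ii) satisfies each of these local relations; matching the relations together with the normalisation then forces $\kupcs{\cdot}=\kups{\cdot}$. Invariance under equivalence is essentially immediate from Definition~\ref{dfn:colofwebs}: reversing an edge and complementing its colour gives a bijection on colourings that leaves every state, and hence every degree, unchanged.

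First I would settle the normalisation. For a single circle labelled $k$, a colouring is just a choice of a $k$-element subset $c\subseteq\cN$, and for a bicolor $b=\{i<j\}$ the state keeps the circle precisely when exactly one of $i,j$ lies in $c$, with sign $+1$ when $j\in c$ and $-1$ when $i\in c$. Summing these signs over all bicolors yields a statistic $d(c)$ on $k$-subsets whose generating function $\sum_c q^{d(c)}$ equals $\qbinb{N}{k}{N-k}$, reproducing the MOY value of the circle. This is the base case of the entire reduction.

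Next come the genuinely local relations, such as the digon reduction $\kupcs{\digona}=\qbinb{m+n}{m}{n}\,\kupcs{\verta}$ and the square relations expressing $\kupcs{\squarea}$ in terms of $\kupcs{\twoverta}$ and $\kupcs{\doubleYa}$. Here the strategy is to fix the colours on the boundary edges of the local tangle and to partition the colourings of the whole web according to this boundary datum. For each fixed boundary datum one then sums over the ways of colouring the interior on the two sides of the relation. The circles appearing in the states split into those supported inside the local region and those that leave it; for the latter the contribution to the degree should depend only on how the boundary is coloured, so that the \emph{difference} of degrees between the two sides of a relation reduces to a purely local count. Packaging this local count and summing the corresponding powers of $q$ is what should produce precisely the quantum binomial coefficients on the right-hand sides.

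The main obstacle is exactly this localisation of the degree. Because $d(\Gamma,c)$ is genuinely non-local --- an algebraic circle count summed over all bicolors, in which a single circle may wander arbitrarily far through the web --- it is not a priori clear that modifying $\Gamma$ inside a disk changes $d$ by an amount depending only on the boundary colours. Establishing that it does, and identifying that local change as a degree of a partition of an ordered set, is the content of the technical lemma of Section~\ref{sec:part-q-ident}. The heart of the proof is therefore to construct, on each side of a skein relation, a bijection between interior colourings so that, bicolor by bicolor, the change in the number of circles matches the partition-degree predicted by that lemma; once this matching is in place, the quantum-binomial identities for degrees of ordered partitions convert the weighted colour sums into the MOY coefficients and close the argument.
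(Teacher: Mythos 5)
Your proposal follows essentially the same route as the paper: reduce Theorem~\ref{thm:evalcol} to verifying the skein relations (the paper cites Wu rather than \cite{MR1659228} for the fact that these relations characterize $\kups{\cdot}$, but the idea is identical), localize the evaluation via colourings of the boundary of open webs, and then match the weighted sums of interior colourings on the two sides of each relation against the partition-degree identity of Lemma~\ref{lem:key}, which supplies the quantum binomial coefficients. The only organizational difference is that the paper further reduces the verification to just relations~(\ref{eq:relass}) and~(\ref{eq:relsquare3}), deriving the circle, digon and remaining square relations from those.
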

  \begin{rmk}\label{rmk:eqwebcoloring}
    \begin{enumerate}
    \item It is worthwhile to note that, if $\Gamma_1$ and $\Gamma_2$ are two equivalent webs, $c_1$ a coloring of $\Gamma_1$ and $c_2$ the corresponding coloring of $\Gamma_2$, then for every bicolor $b$, the states $(\Gamma_1, c_1)_b$ and $d(\Gamma_2, c_2)_b$ are equal. It follows that $d(\Gamma_1, c_1) = d(\Gamma_2, c_2)$ and $\kupcs{\Gamma_1} = \kupcs{\Gamma_2}$.
    \item Let $\Gamma$ be a web. The graph $\Gamma'$ obtained by removing all edges\footnote{Formally, one should as well remove the vertices such that all adjacent edges are labelled by $0$ or $N$.} labelled by $0$ or $N$ is a web and we have $\kupcs\Gamma = \kupcs{\Gamma'}$. 
    \item The theorem can be seen as a generalization of \cite[Theorem 1.11]{2013arXiv1312.0361R}, one can therefore wonder if the other result of \cite{2013arXiv1312.0361R} remains true in our context, namely: are all colorings of a web $\Gamma$ Kempe equivalent?  
    \end{enumerate}
  \end{rmk}

The evaluation of MOY graphs $\kups{\cdot}$ is multiplicative with respect to the disjoint union  and it satisfies the following skein relations:

\begin{align}
  \kups{\vcenter{\hbox{\tikz[scale= 0.5]{\draw[->] (0,0) arc(0:360:1cm) node[right] {\small{$\!k\!$}};}}}}=
\begin{pmatrix}
  N \\ k
\end{pmatrix}_{\!\! q}.\label{eq:relcircle}
\end{align}
\begin{align}
   \kup{\stgamma} = \kup{\stgammaprime} \label{eq:relass}\\
 \kup{\digona} = \arraycolsep=2.5pt
  \begin{pmatrix}
    m+n \\ m
  \end{pmatrix}_{\!\!q}
\kup{\verta} \label{eq:relbin1}
\end{align}
\begin{align}
\arraycolsep=2.5pt
\kup{\digonb} = 
  \begin{pmatrix}
    N-m \\ n
  \end{pmatrix}_{\!\!q}
\kup{\vertb}  \label{eq:relbin2}
\end{align}
\begin{align}
 \kup{\squarea} = \kup{\twoverta} + [N-m-1]_q\kup{\doubleYa} \label{eq:relsquare1}
\end{align}
\begin{align}
\kup{\squareb}=\!
  \begin{pmatrix}
    m-1 \\ n
  \end{pmatrix}_{\!\!q}\!
\kup{\bigHb}  +
\!\begin{pmatrix}
  m-1 \\n-1
\end{pmatrix}_{\!\!q}\!
\kup{\doubleYb} \label{eq:relsquare2}
\end{align}
\begin{align}
  \kup{\squarec}= \sum_{j=0}^\infty\begin{pmatrix}l \\ k-j \end{pmatrix}_{\!\!q} \kup{\squared}\label{eq:relsquare3}
\end{align}

In the previous formulas, $q$ is a formal variable, $[k]_q:= \frac{q^{+k}- q^{-k}}{q^{+1}- q^{-1}}$ and 
\[
  \begin{pmatrix}
    k \\ l
  \end{pmatrix}_{\!\!q} :=
  \begin{cases}
    0 & \textrm{ if $k<0$, $l<0$ or $l>k$,} \\
    \frac{[k]_q!}{[l]_q! [l-k]_q!} &\textrm{else,}
  \end{cases}
\quad \textrm{where } [i]!_q = \prod_{j=1}^i [j]_q.
\]
In the sequel, we will often write $\qbinb{l}{k}{l-k}$ for $\begin{pmatrix}  k \\ l\end{pmatrix}_{\!\!q}$ to emphasize the symmetry between $k$ and $l-k$ in the definition.

Wu \cite{pre06302580} proved that these relations characterize $\kups{\cdot}$. Hence to prove Theorem~\ref{thm:evalcol}, it is enough to prove that $\kupcs{\cdot}$ satisfies the same local relations.

In order to check the skein relation, it is convenient to have a local definition for $\kupcs{\cdot}$. For this purpose, we need some definitions:

\begin{dfn}
  \label{dfn:openweb}
  An \emph{open web} is a generic intersection of a web $\Gamma$ with $\RR\times [0,1]$ where $I$ is a non-empty interval. By "generic" we mean that:
  \begin{itemize}
  \item $\RR \times \{0,1\}$ does not intersect any vertex of $\Gamma$.
  \item the intersection of $\RR \times I$ and the edges of $\Gamma$ are transverse.
  \end{itemize}
  
The \emph{boundary} $\partial \Gamma$ of $\Gamma$ consists of intersection of $\Gamma$ with $\RR \times \partial[0,1]$ together with the orientations and the labels induced by $\Gamma$. A \emph{coloring} of an open web is defined exactly as for webs. A coloring $c$ of an open web induces a coloring $c_{\partial}$ of its boundary. 
\end{dfn}

It is worthwhile to remark that one may stack\footnote{Or compose, if one think of open webs as morphism in a suitable category.} two open webs (with some compatibility conditions of part of the boundaries) to obtain a new open web. If the colorings agree on the common boundaries, one can even stack colored webs.

\begin{dfn}
  \label{dfn:stateopen}
  If $(\Gamma,c)$ is a colored open web and $b$ is a bicolor, the \emph{state} $(\Gamma,c)_b$ is the collection of oriented circle and arcs obtained from $\Gamma$ by erasing all edges $e$ of $\Gamma$ such that $\#(c(e) \cap b) \neq 1$ and reversing the orientation of every remaining edge $e$ such  that $b^- \in c(e)$. The \emph{degree} $d(\Gamma,c)_b$ of the state $(\Gamma,c)_b$ is defined by:
\[
d(\Gamma,c)_b:= C_+ - C_- + \frac12(TR - TL - BR +BL)
\]
 where:
 \begin{itemize}
 \item $C_+$ and $C_-$ are the numbers or positively and negatively oriented circles.
 \item $TR$ is the number of arcs with both ends on the top (i. e. in $\RR\times \{1\}$) rightwards oriented.
 \item $TL$ is the number of arcs with both ends on the top leftwards oriented.
 \item $BR$ is the number of arcs with both ends on the bottom (i. e. in $\RR\times \{0\}$) rightwards oriented.
 \item $TL$ is the number of arcs with both ends on the bottom leftwards oriented.
 \end{itemize}
The \emph{degree} of a colored web $(\Gamma,c)$ is equal to $d(\Gamma,c):=\sum_{b \textrm{ bicolor}} d(\Gamma,c)_b$.

If $\Gamma$ is an open web and $c_\partial$ a coloring of its boundary, we define:
\[
\kupcs{\Gamma}^{c_\partial} :=  \sum_{\substack{c \textrm{ coloring of } \Gamma \\ c \textrm{ induces $c_\partial$ on the boundary}}} q^{d(\Gamma,c)}. 
\]
\end{dfn}


\begin{lem}
  \label{lem:additivityofdegree}
  Let $(\Gamma,c)$ be a colored open web obtained by stacking $(\Gamma', c')$ and $(\Gamma'',c'')$ one onto the other. For every bicolor $b$, we have $d((\Gamma,c)_b)=d((\Gamma',c')_b)+d((\Gamma'',c'')_b)$. It follows that we have: $d(\Gamma,c)=d(\Gamma',c')+d(\Gamma'',c'').$
\end{lem}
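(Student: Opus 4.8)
The plan is to prove additivity for each individual bicolor $b$ first, since the final statement about $d(\Gamma,c)$ follows immediately by summing the per-bicolor identity over all bicolors $b$ (by the definition $d(\Gamma,c)=\sum_b d(\Gamma,c)_b$). So I would reduce everything to establishing the single-bicolor equation $d((\Gamma,c)_b)=d((\Gamma',c')_b)+d((\Gamma'',c'')_b)$ and fix a bicolor $b$ throughout.

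First I would describe the geometric setup precisely. When we stack $(\Gamma'',c'')$ on top of $(\Gamma',c')$, the common horizontal boundary is a line $\RR\times\{1/2\}$ (after rescaling), and the states are built by the same recipe: erase edges $e$ with $\#(c(e)\cap b)\neq 1$, and reverse the orientation of each remaining edge with $b^-\in c(e)$. The key observation is that this erasing/reversing procedure is \emph{local} to each edge and depends only on the coloring $c$ restricted to that edge. Since the coloring $c$ on $\Gamma$ restricts to $c'$ on $\Gamma'$ and to $c''$ on $\Gamma''$, and since $c'$ and $c''$ agree on the common boundary, the state $(\Gamma,c)_b$ is exactly the geometric union of $(\Gamma',c')_b$ and $(\Gamma'',c'')_b$ glued along the matching boundary points on $\RR\times\{1/2\}$. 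Each such boundary point is either a point where an edge survives the erasing (so the arcs glue) or a point where the incident edge is erased (so nothing glues there); in either case the gluing is unambiguous and orientation-compatible, because an edge surviving in $\Gamma$ survives in whichever piece contains it with the same orientation rule.

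Next I would analyze how the connected components (circles and arcs) of the glued state relate to those of the two pieces, since the degree is a signed count of circles plus half-integer corrections from the four types of boundary arcs. The combinatorial heart is a bookkeeping argument along the gluing line $L=\RR\times\{1/2\}$. Let me classify the components of $(\Gamma,c)_b$ by how they meet $L$. A component disjoint from $L$ lies entirely in one piece and contributes the same way to both sides. For components meeting $L$, each crossing point of $L$ is a top-endpoint of an arc in $\Gamma'$ glued to a bottom-endpoint of an arc in $\Gamma''$. The plan is to show that when two arcs are joined at $L$, their combined contribution to $d((\Gamma,c)_b)$ equals the sum of their separate contributions in $d((\Gamma',c')_b)$ and $d((\Gamma'',c'')_b)$. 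This is where the specific coefficients $\pm\tfrac12$ and the orientation conventions (TR, TL, BR, BL) are engineered to telescope: an arc of $\Gamma'$ ending at the top of $L$ contributes $\pm\tfrac12$ according to its orientation there, and the glued-on arc of $\Gamma''$ starting at the bottom of $L$ contributes $\mp\tfrac12$ with the opposite sign, because the top/bottom labels and the left/right orientation are swapped across the interface. I would verify case by case (a component can merge several arcs from each piece, and may close up into a circle) that every internal $L$-crossing cancels, leaving exactly the external boundary contributions and the integer circle counts, which then match.

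The main obstacle I anticipate is the case where a single component of the glued state is assembled from \emph{several} arcs of $\Gamma'$ and $\Gamma''$ alternating across $L$, and in particular the case where such a component closes up into a circle of $(\Gamma,c)_b$ even though it is a union of genuine (non-closed) arcs in the two pieces. There I must confirm two things simultaneously: that the $\pm\tfrac12$ endpoint contributions along $L$ cancel in pairs, and that the net sign/count of the resulting circle is correctly reproduced by the arc-orientation data. Concretely, if a circle is formed from $k$ top-arcs of $\Gamma'$ and $k$ bottom-arcs of $\Gamma''$, I expect the $2k$ half-integer terms at the $L$-crossings to cancel in $k$ canceling pairs, and the orientation of the closed loop to be consistent, so that $C_+-C_-$ on the left absorbs exactly the leftover. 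Making this telescoping rigorous — rather than merely plausible by a picture — is the crux; I would handle it by orienting $L$ and tracking, for each maximal arc-segment, the sign it contributes as one traverses the component, showing the total is orientation-independent and that interface terms vanish.
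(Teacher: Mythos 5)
Your reduction to a single bicolor and your identification of the state $(\Gamma,c)_b$ with the gluing of $(\Gamma',c')_b$ and $(\Gamma'',c'')_b$ along $L=\RR\times\{1/2\}$ are both correct, and components disjoint from $L$ are indeed handled trivially. The genuine gap is in the telescoping mechanism you propose for components that do meet $L$. The contributions $\pm\tfrac12$ in Definition~\ref{dfn:stateopen} are attached to whole arcs according to their type ($TR$, $TL$, $BR$, $BL$), not to endpoints, and they do \emph{not} acquire opposite signs across the interface. Concretely: let an arc of $\Gamma'$ have both ends on $L$ and be rightwards oriented (type $TR$ for $\Gamma'$, contributing $+\tfrac12$), and let an arc of $\Gamma''$ with both ends on $L$ close it up into a circle; that second arc is then leftwards oriented (type $BL$ for $\Gamma''$), contributing $+\tfrac12$ as well --- the \emph{same} sign, not the opposite one --- and it is precisely the sum $+\tfrac12+\tfrac12=+1$ that reproduces the positively oriented circle counted by $C_+$ on the glued side. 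For the same reason, a circle assembled from $2k$ arcs has half-integer contributions summing to $\pm1$, so they cannot ``cancel in $k$ pairs'' while $C_+-C_-$ ``absorbs a leftover''; as written, your bookkeeping is internally inconsistent, and following it literally would not close the argument. (Your sign-flip claim is true only for ``snake'' configurations, where the component keeps moving in the same horizontal direction after crossing $L$.)

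The missing idea --- and the paper's entire proof --- is to replace endpoint bookkeeping by an invariant that is local \emph{along the curve}: the degree of a component equals its total normalized curvature (turning number). Since states meet the horizontal lines transversally, an embedded circle has turning $\pm1$, an arc with both ends on the same horizontal line has turning $\pm\tfrac12$ with exactly the signs of Definition~\ref{dfn:stateopen}, and a through arc has turning $0$. Gluing the two states concatenates curves across $L$, and the curvature integral is additive under concatenation (the Chasles relation), so $d(\cdot)_b$ is additive component by component with no case analysis: snakes, multiple crossings, and circle formation are all instances of the same computation. Your closing suggestion of ``tracking a sign along a traversal of the component'' is groping toward exactly this invariant; if you make it precise (total turning of the tangent direction), your classification of components becomes unnecessary and the proof collapses to a few lines.
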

\begin{proof}[Sketch of the proof.]
  If we fix a bicolor $b$, the degree $d((\Gamma,c)_b)$ can be seen as a the integral of a (normalized) curvature along the arcs. With this point of view, the lemma simply follows from the Chasles relation.
\end{proof}

\begin{rmk}
From Lemma~\ref{lem:additivityofdegree}, we deduce that in order to check that $\kupcs{\cdot}$ satisfies the relations (\ref{eq:relcircle}) to (\ref{eq:relsquare3}), it is enough to check that $\kupcs{\cdot}^{c_\partial}$ satisfies the relations  (\ref{eq:relcircle}) to (\ref{eq:relsquare3}) and for every coloring $c_\partial$ of the common\label{page:footnotecolbdy}\footnote{All the webs involved in a local relation have the same boundary.}  boundary.
\end{rmk}

\section{Partitions and $q$-identities}
\label{sec:part-q-ident}
The aim of this section is to introduce degrees of partitions of ordered set and to prove Lemma~\ref{lem:key} from which Theorem~\ref{thm:evalcol} will follow. 

\begin{dfn}
  \label{dfn:degreepartition}
  Let $(X,<)$ be a finite totally ordered set and $Y$ and $Z$ two disjoint subsets of $X$. The \emph{degree $d(Y\sqcup Z)$ of $Y\sqcup Z$} is the integer defined by the formula:
\[
d(Y\sqcup Z) = \#\{(y, z)\in (Y\times Z) | y<z\} -  \#\{(y,z)\in (Y\times Z) | y>z\}.
\]
\end{dfn}

\begin{lem}
  \label{lem:binomrelation}
Let $n$ and $m$ be two integers such that $m+n\geq 1$. The following relation holds:
\[\qbin{m}{n} = q^{+m} \qbinb{m+n-1}{m}{n-1} + q^{-n}\qbinb{m+n-1}{m-1}{n}
\]
\end{lem}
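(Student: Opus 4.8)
The plan is to unwind the notation and recognise the claimed identity as a $q$-analogue of the Pascal recurrence. Writing out the macros, $\qbin{m}{n}$ is the balanced Gaussian binomial $\frac{[m+n]_q!}{[m]_q![n]_q!}$, while $\qbinb{m+n-1}{m}{n-1}=\frac{[m+n-1]_q!}{[m]_q![n-1]_q!}$ and $\qbinb{m+n-1}{m-1}{n}=\frac{[m+n-1]_q!}{[m-1]_q![n]_q!}$, so the statement is the recurrence
\[
\frac{[m+n]_q!}{[m]_q![n]_q!} = q^{m}\frac{[m+n-1]_q!}{[m]_q![n-1]_q!} + q^{-n}\frac{[m+n-1]_q!}{[m-1]_q![n]_q!}.
\]
First I would dispose of the degenerate cases. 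If $m<0$ or $n<0$ then every term vanishes by the convention $\begin{pmatrix}k\\l\end{pmatrix}_{\!\!q}=0$ for $l<0$ or $l>k$, so both sides are $0$. The hypothesis $m+n\ge 1$ rules out $m=n=0$; and when exactly one of $m,n$ is zero one checks directly that both sides equal $1$ (e.g.\ for $n=0$ the left side is $\binom{m}{m}_q=1$, the first summand $q^{m}\binom{m-1}{m}_q$ vanishes since $m>m-1$, and the second is $q^{0}\binom{m-1}{m-1}_q=1$).

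It remains to treat the generic case $m,n\ge 1$, where all factorials are defined. Here I would pull out the common factor $\frac{[m+n-1]_q!}{[m]_q![n]_q!}$ from the right-hand side, using $\frac{1}{[n-1]_q!}=\frac{[n]_q}{[n]_q!}$ and $\frac{1}{[m-1]_q!}=\frac{[m]_q}{[m]_q!}$, which turns the two summands into $q^{m}[n]_q$ and $q^{-n}[m]_q$ times that common factor. The whole identity then reduces to the single scalar relation
\[
q^{m}[n]_q + q^{-n}[m]_q = [m+n]_q,
\]
after which multiplying back by $\frac{[m+n-1]_q!}{[m]_q![n]_q!}$ and using $[m+n-1]_q!\,[m+n]_q=[m+n]_q!$ returns exactly the left-hand side.

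The heart of the proof is thus this scalar relation, and it is the one genuine computation: substituting $[k]_q=\frac{q^{k}-q^{-k}}{q-q^{-1}}$ gives
\[
q^{m}(q^{n}-q^{-n}) + q^{-n}(q^{m}-q^{-m}) = q^{m+n}-q^{m-n}+q^{m-n}-q^{-m-n} = q^{m+n}-q^{-(m+n)},
\]
where the two $q^{m-n}$ terms cancel, and dividing by $q-q^{-1}$ yields $[m+n]_q$. I do not expect any real obstacle here: the only thing to watch is the bookkeeping of the degenerate cases and the factorial conventions, while the core is this two-line cancellation. I would also remark that the identity has a transparent combinatorial reading in the spirit of this section: interpreting $\qbin{m}{n}$ as $\sum q^{d(Y\sqcup Z)}$ over partitions of an $(m+n)$-element ordered set into blocks of sizes $m$ and $n$, the two summands correspond to whether the largest element lies in the size-$n$ block (contributing $+m$ to the degree, via Definition~\ref{dfn:degreepartition}) or the size-$m$ block (contributing $-n$), which is the conceptual source of the factors $q^{m}$ and $q^{-n}$.
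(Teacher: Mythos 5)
Your proof is correct and follows essentially the same route as the paper: both arguments hinge on the factorization $[m+n]_q = q^{+m}[n]_q + q^{-n}[m]_q$ inside the factorial expression, handle negative $m$ or $n$ by the vanishing convention, and append the combinatorial reading in terms of degrees of partitions. If anything you are slightly more careful (you check the case where exactly one of $m,n$ is zero, which the paper's factorial manipulation glosses over), and your combinatorial remark---splitting on the block containing the \emph{largest} element---is the one that actually yields the stated powers $q^{+m}$ and $q^{-n}$, whereas the paper's split on the element $1$ as written would give the mirrored identity.
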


\begin{proof} If $m$ or $n$ is negative, the relation reads $0=0$. We suppose that $m$ and $n$ are non-negative.
  This can be thought of in terms of degree of partition. The left-hand side counts with degree all the partitions of $\llbracket 1, m+n\rrbracket$ which consists of a set $Y$ of $m$ elements and a set $Z$ of $n$ elements. The first term of the right-hand side counts partitions such that $1$ is in $Y$, the second counts partitions such that $1$ is in $Z$. We can as well prove this equality directly:
  \begin{align*}
\qbin{m}{n}
&=  \frac{[m+n]!}{[m]! [n!]} 
 =  \frac{(q^{+m}[n] + q^{-n}[m])[m+n-1]!}{[m]! [n!]} \\
&=  \frac{(q^{+m}[n])[m+n-1]!}{[m]! [n]!} +
    \frac{(q^{-n}[m])[m+n-1]!}{[m]! [n]!} \\
&=  \frac{(q^{+m})[m+n-1]!}{[m]! [n-1]!} +
    \frac{(q^{-n})[m+n-1]!}{[m-1]! [n]!} \\
&=   q^{+m} \qbinb{m+n-1}{m}{n-1} + q^{-n}\qbinb{m+n-1}{m-1}{n}
  \end{align*}
\end{proof}
Note that $\qbin{m}{n}$ is entirely determined by the formula of Lemma~\ref{lem:binomrelation} and the fact that that for all $k\geq 0$, $\qbinb{k}{0}{k} = \qbinb{k}{k}{0} =1$.

The following lemma is not strictly necessary, however we do think that it enlightens the relation between degree of disjoint union and quantum binomials.

\begin{lem}
  \label{lem:degpartbinom}
  We consider $(X,<)$ a finite totally ordered set with $m+n$ element. Let $\mathcal{P}_{m,n}(X)$ the set of partition $Y\sqcup Z$ of $X$ such that $\#Y=m$ and $\#Z=n$. The following relation holds:
\[
\sum_{Y\sqcup Z \in \mathcal{P}_{m,n}(X)} q^{d(Y\sqcup Z)}= \qbin{m}{n}
\]
\end{lem}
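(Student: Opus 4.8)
The plan is to prove the identity by induction on $m+n$, matching the recursion of Lemma~\ref{lem:binomrelation}. First I would observe that the left-hand side depends only on the pair $(m,n)$ and not on the chosen totally ordered set: any two finite totally ordered sets of the same cardinality are order-isomorphic, and such an isomorphism carries $\mathcal{P}_{m,n}(X)$ bijectively onto $\mathcal{P}_{m,n}(X')$ while preserving the degree $d(Y\sqcup Z)$. So I write $g(m,n):=\sum_{Y\sqcup Z\in\mathcal{P}_{m,n}(X)}q^{d(Y\sqcup Z)}$ and reduce to showing $g(m,n)=\qbin{m}{n}$. By the remark following Lemma~\ref{lem:binomrelation}, the quantum binomial $\qbin{m}{n}$ is the unique quantity satisfying that recursion together with the boundary values $\qbinb{k}{0}{k}=\qbinb{k}{k}{0}=1$; hence it suffices to check that $g$ obeys the same recursion and the same boundary values.

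For the boundary values, when $n=0$ (resp. $m=0$) there is a single partition, namely $Z=\emptyset$ (resp. $Y=\emptyset$); then $Y\times Z=\emptyset$, so $d(Y\sqcup Z)=0$ and $g(m,0)=g(0,n)=1$, as required. For the recursive step, assume $m+n\ge 1$ and let $x_0$ be the \emph{largest} element of $X$; I split the sum according to whether $x_0\in Y$ or $x_0\in Z$. If $x_0\in Z$, the remaining elements give a partition of $X\setminus\{x_0\}$ into $Y$ of size $m$ and $Z':=Z\setminus\{x_0\}$ of size $n-1$; since $x_0$ is maximal, every pair $(y,x_0)$ with $y\in Y$ satisfies $y<x_0$ and contributes $+1$, so $d(Y\sqcup Z)=m+d(Y\sqcup Z')$. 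Summing over these partitions produces a factor $q^{+m}$ times $g(m,n-1)$. If instead $x_0\in Y$, the remaining partition has $Y':=Y\setminus\{x_0\}$ of size $m-1$ and $Z$ of size $n$, and each pair $(x_0,z)$ with $z\in Z$ satisfies $x_0>z$, contributing $-1$; thus $d(Y\sqcup Z)=-n+d(Y'\sqcup Z)$, and summing gives $q^{-n}g(m-1,n)$. Adding the two contributions and applying the induction hypothesis yields
\[
g(m,n)=q^{+m}\qbinb{m+n-1}{m}{n-1}+q^{-n}\qbinb{m+n-1}{m-1}{n},
\]
which equals $\qbin{m}{n}$ by Lemma~\ref{lem:binomrelation}, completing the induction.

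The only delicate point is the sign bookkeeping in the recursive step: one must peel off the largest (rather than the smallest) element so that the exponents $q^{+m}$ and $q^{-n}$ land on the correct terms and reproduce exactly the recursion of Lemma~\ref{lem:binomrelation} rather than its mirror image; removing the smallest element would instead give the equivalent recursion $g(m,n)=q^{-m}g(m,n-1)+q^{+n}g(m-1,n)$. Everything else is routine. As an alternative I could avoid induction entirely by identifying $d(Y\sqcup Z)$ with a signed inversion statistic: writing $a=\#\{(y,z)\in Y\times Z:y<z\}$ and $b=\#\{(y,z)\in Y\times Z:y>z\}$ we have $a+b=mn$ and $d=a-b=2a-mn$, so $q^{d}=q^{-mn}(q^{2})^{a}$, and summing $(q^{2})^{a}$ over all $m$-subsets reduces to the classical Gaussian-binomial generating function; this recovers $\qbin{m}{n}$ after the symmetric renormalization, but the inductive argument above is cleaner given the tools already in hand.
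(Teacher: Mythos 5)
Your proof is correct and follows essentially the same route as the paper: both argue by induction, peeling off the largest element of $X$, splitting the sum according to which block contains it to obtain the recursion $q^{+m}g(m,n-1)+q^{-n}g(m-1,n)$, and concluding via Lemma~\ref{lem:binomrelation} and the boundary values $g(k,0)=g(0,k)=1$. Your sign bookkeeping is in fact slightly more careful than the paper's, whose final displayed line contains a typo ($q^{n+1}p_{m,n+1}$ where $q^{-(n+1)}p_{m,n+1}$ is meant).
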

\begin{proof}
  The statement actually does not depends on $X$. Hence we may suppose that $X=[1,n+m]$ with the natural order. Let us write:
\[p_{m,n}= \sum_{Y\sqcup Z \in \mathcal{P}_{m,n}([1, m+n])} q^{d(Y\sqcup Z)}= \qbin{m}{n}
\]
For every positive integer $k$, we have $p_{k,0} = p_{0,k}=1$. We have:
\begin{align*}
  p_{m+1,n+1}&= \sum_{Y\sqcup Z \in \mathcal{P}_{m+1,n+1}([1, m+n+2])} q^{d(Y\sqcup Z)}
\\ &= \sum_{\substack{Y\sqcup Z \in \mathcal{P}_{m+1,n+1}([1, m+n+2]) \\ m+n+2 \in Y}} 
 q^{d(Y\sqcup Z)} +  \sum_{\substack{Y\sqcup Z \in \mathcal{P}_{m+1,n+1}([1, m+n+2]) \\ m+n+2 \in Z}} q^{d(Y\sqcup Z)}
\\ &= \sum_{\substack{Y\sqcup Z \in \mathcal{P}_{m,n+1}([1, m+n+1])}} 
 q^{d(Y\sqcup Z)-(n+1)} +  \sum_{\substack{Y\sqcup Z \in \mathcal{P}_{m+1,n}([1, m+n+1]) \\ m+n+1 \in Y}} q^{d(Y\sqcup Z)+ m+1}
\\ &= q^{n+1}p_{m,n+1} + q^{m+1}p_{m+1,n}
\end{align*}
It satisfies the same recursion formula so the quantum binomial (and have the same initial values). This proves that for all $m$ and $n$ we have $p_{m,n}= \qbin{m}{n}$
\end{proof}

The following observation will be very useful for  proving  Lemma~\ref{lem:key}. 
\begin{lem}\label{lem:degpartsep}
  Let $X$ and $Y$ be two disjoint subsets of $\llbracket 1, M \rrbracket$ and $k$ be an integer of $\llbracket 1, M-1\rrbracket$. Let us write  $X_1=X\cap \llbracket 1,k \rrbracket$, $Y_1=Y\cap \llbracket 1,k \rrbracket$, $X_1=X\cap \llbracket k+1, N \rrbracket$ and $Y_1=Y\cap \llbracket k+1, M \rrbracket$. The following relation holds:
\[
d(X\sqcup Y) = d(X_1\sqcup Y_1) + d(X_2\sqcup Y_2) + \#X_1\#Y_2 - \#Y_1\#X_2.
\]
\end{lem}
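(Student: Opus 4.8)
The plan is to unwind the definition of $d(X\sqcup Y)$ directly and to reorganize the signed count according to the splitting of the ground set at $k$. By Definition~\ref{dfn:degreepartition}, $d(X\sqcup Y)$ is the number of pairs $(x,y)\in X\times Y$ with $x<y$ minus the number with $x>y$; since $X$ and $Y$ are disjoint, no pair has $x=y$, so every element of $X\times Y$ contributes exactly $+1$ or $-1$. First I would write $X=X_1\sqcup X_2$ and $Y=Y_1\sqcup Y_2$, which partitions the index set as
\[
X\times Y = (X_1\times Y_1)\ \sqcup\ (X_2\times Y_2)\ \sqcup\ (X_1\times Y_2)\ \sqcup\ (X_2\times Y_1),
\]
and then evaluate the signed count block by block.

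The two ``diagonal'' blocks are immediate: since $X_1,Y_1\subseteq\llbracket 1,k\rrbracket$ carry the order induced from $\llbracket 1,M\rrbracket$, the signed count over $X_1\times Y_1$ is by definition exactly $d(X_1\sqcup Y_1)$, and likewise the signed count over $X_2\times Y_2$ is $d(X_2\sqcup Y_2)$. For the two ``off-diagonal'' blocks the key observation is that the comparison sign is constant: every $x\in X_1$ satisfies $x\le k<y$ for every $y\in Y_2$, so each of the $\#X_1\#Y_2$ pairs in $X_1\times Y_2$ contributes $+1$; symmetrically every $x\in X_2$ satisfies $x\ge k+1>y$ for every $y\in Y_1$, so each of the $\#X_2\#Y_1$ pairs in $X_2\times Y_1$ contributes $-1$. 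Adding the four contributions yields exactly
\[
d(X\sqcup Y)=d(X_1\sqcup Y_1)+d(X_2\sqcup Y_2)+\#X_1\#Y_2-\#Y_1\#X_2.
\]

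I do not expect any genuine obstacle here: the statement is a direct bookkeeping identity, and the only point requiring care is the sign convention, namely tracking that the block $X_1\times Y_2$ (lower $X$, upper $Y$) is the one contributing positively while $X_2\times Y_1$ contributes negatively, in accordance with the asymmetric roles of the first and second set in Definition~\ref{dfn:degreepartition}. Disjointness of $X$ and $Y$ is used only to rule out the case $x=y$, and the hypothesis $k\in\llbracket 1,M-1\rrbracket$ serves only to ensure that the splitting into a lower and an upper half is meaningful.
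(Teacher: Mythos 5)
Your proof is correct and is exactly the argument the paper intends: the paper's proof is simply ``It follows from the definition,'' and your block decomposition of $X\times Y$ with constant signs on the two off-diagonal blocks is the natural way to spell that out. No gaps; the sign bookkeeping and the use of disjointness are handled correctly.
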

\begin{proof}
  It follows from the definition.
\end{proof}
The following lemma is the key ingredient to prove theorem~\ref{thm:evalcol}. It should be compared to \cite[Proof of relation 2.10]{MR3263166}. 
   \begin{lem}\label{lem:key}Let us fix $X$ and $Y$ two disjoint subsets of in $ \llbracket 1,M \rrbracket$, such that $\#X= \#Y+l$ with $l\geq 0$. For every integer $k_1$, the following relation holds:
\begin{align*}
\sum_{\substack{X= X_1 \sqcup X_2 \\ \#X_1 = k_1}}q^{d(X_1\sqcup X_2)+ d(Y\sqcup X_1)}
 = \sum_{j_2= 0}^{\infty}\qbinb{l}{k_1 - j_2}{l-k_1+j_2}\sum_{\substack{Y= Y_1 \sqcup Y_2 \\ \#Y_2 = j_2}}q^{ d(Y_1\sqcup Y_2) + d(Y_2\sqcup X)}
\end{align*}
   \end{lem}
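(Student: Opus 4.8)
Throughout I abbreviate $\langle A,B\rangle:=d(A\sqcup B)$ for disjoint $A,B\subseteq\llbracket 1,M\rrbracket$; by Definition~\ref{dfn:degreepartition} this pairing is \emph{additive} in each slot over disjoint unions and \emph{antisymmetric}, $\langle A,B\rangle=-\langle B,A\rangle$. Since $\langle A,B\rangle$ depends only on the relative order of the elements involved, I may and will assume $X\sqcup Y=\llbracket 1,M\rrbracket$ with $M=\#X+\#Y$. Writing $m=\#X$, $n=\#Y$ and $l=m-n$, I set
\[
F(X,Y,k):=\sum_{\substack{X=X_1\sqcup X_2\\ \#X_1=k}}q^{\langle X_1,X_2\rangle+\langle Y,X_1\rangle},\qquad
G(X,Y,j):=\sum_{\substack{Y=Y_1\sqcup Y_2\\ \#Y_2=j}}q^{\langle Y_1,Y_2\rangle+\langle Y_2,X\rangle},
\]
so that the assertion reads $F(X,Y,k_1)=\sum_{j\ge 0}\qbinb{l}{k_1-j}{l-k_1+j}\,G(X,Y,j)$, a finite sum on each side. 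The plan is to prove this by induction on $M=\#X+\#Y$, the case $X=Y=\emptyset$ being trivial (both sides equal $\qbinb{0}{k_1}{-k_1}$, which is $1$ if $k_1=0$ and $0$ otherwise).

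For the inductive step I would peel off the global maximum $w$ of $\llbracket 1,M\rrbracket$. The key feature is that, $w$ being maximal, $\langle\{w\},A\rangle=-\#A$ and $\langle A,\{w\}\rangle=+\#A$ for every $A$ not containing $w$ (all signs $\sgn$ are forced). Splitting each sum according to whether $w$ lies in the selected part then gives, in the case $w\in X$,
\[
F(X,Y,k)=q^{k}F(X\setminus\{w\},Y,k)+q^{k-l}F(X\setminus\{w\},Y,k-1),\qquad
G(X,Y,j)=q^{j}G(X\setminus\{w\},Y,j),
\]
and in the case $w\in Y$,
\[
F(X,Y,k)=q^{-k}F(X,Y\setminus\{w\},k),\qquad
G(X,Y,j)=q^{-j}G(X,Y\setminus\{w\},j)+q^{-l-j}G(X,Y\setminus\{w\},j-1).
\]

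Feeding the induction hypothesis into these recursions, the identity collapses term by term in the $G(\,\cdot\,,\,\cdot\,,j)$ and reduces to a single relation between quantum binomials. Writing $a=k_1-j$ and $b=l-k_1+j$ (so $a+b=l$), the case $w\in X$ requires exactly
\[
\qbinb{l}{a}{b}=q^{-b}\qbinb{l-1}{a-1}{b}+q^{a}\qbinb{l-1}{a}{b-1},
\]
which is Lemma~\ref{lem:binomrelation}; the case $w\in Y$ reduces, after clearing the common factor $q^{-k_1}$, to the very same relation with upper index $l+1$. Thus the substance of the proof is nothing but the quantum Pascal rule already established, applied coefficient-wise, together with the elementary bookkeeping of the degree shifts above.

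The delicate point — and what I expect to be the real obstacle — is the range of validity. Peeling $w\in Y$ sends $l\mapsto l+1$, which is harmless, but peeling $w\in X$ sends $l\mapsto l-1$, and to invoke the induction hypothesis as stated I need the reduced instance to satisfy $\#X\ge\#Y$, i.e. $l\ge 1$. For $l\ge 1$ peeling the global maximum is always safe, and for $l=0$ one may instead peel a global extreme (maximum or minimum, via the order-reversed analogue of the recursions) lying in $Y$, again staying inside $l\ge 0$. The argument therefore closes for every configuration \emph{except} the symmetric residual identity $F(X,Y,k_1)=G(X,Y,k_1)$ with $\#X=\#Y$ and \emph{both} extremes of $\llbracket 1,M\rrbracket$ in $X$. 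To settle this case I would run the same peeling induction but reading the quantum binomials as Gaussian binomials with arbitrary integer upper index; these still obey the Pascal relation of Lemma~\ref{lem:binomrelation}, so the recursion closes for every $l\in\ZZ$, and specializing to $l\ge 0$ — where the extended coefficients coincide with the convention of the statement — recovers the lemma. Verifying this extended bookkeeping, and checking that it does not perturb the $l\ge 0$ values that actually appear in the statement, is the most technical part; it is the precise analogue of the step flagged as the proof of relation~2.10 in the work being compared to, and it is what ultimately feeds into Theorem~\ref{thm:evalcol}.
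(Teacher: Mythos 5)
Your peeling recursions are correct (the exponent bookkeeping in all four checks out), their reduction to the quantum Pascal rule of Lemma~\ref{lem:binomrelation} is exactly right, and up to that point your argument coincides with the paper's proof: the paper also inducts on $\#X+\#Y$, peels an extreme element (the smallest rather than the largest), splits according to whether it lies in $X$ or in $Y$, and runs into the same residual configuration, namely $l=0$ with both extremes of $X\sqcup Y$ in $X$. The two proofs part ways only there. The paper stays inside the regime $l\geq 0$ by a splitting argument: when $l=0$ and both extremes lie in $X$, an intermediate-value count produces a cut point $k$ for which $X'=X\cap\llbracket 1,k\rrbracket$, $Y'=Y\cap\llbracket 1,k\rrbracket$, $X''=X\cap\llbracket k+1,M\rrbracket$, $Y''=Y\cap\llbracket k+1,M\rrbracket$ satisfy $\#X'=\#Y'$ and $\#X''=\#Y''$ with all four sets nonempty; Lemma~\ref{lem:degpartsep} then factors each side into a product of the corresponding sums on the two halves (the cross terms cancel), and the induction hypothesis applies to each half separately. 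Your alternative---prove the stronger statement for every $l\in\ZZ$ with Gaussian binomials of arbitrary integer upper index---is viable and in fact tidier, since once the statement is formulated for all $l\in\ZZ$ the case analysis evaporates: one always peels the maximum and applies the relevant recursion plus Pascal, inducting simply on $\#X+\#Y$. But your sketch leaves the key point unexecuted, and it hides a trap you do not address: the notation $\qbinb{l}{k_1-j_2}{l-k_1+j_2}$ is symmetric in its two lower entries, and for $l<0$ at most one of them is nonnegative, so the extension must be specified. The extended identity is true if you define the coefficient as $\prod_{i=0}^{a-1}[l-i]_q\,/\,[a]_q!$ with $a=k_1-j_2$ when $a\geq 0$, and as $0$ when $k_1-j_2<0$; if instead you extend through the other lower index, the claim is already false for $X=\{1\}$, $Y=\{2,3\}$, $k_1=0$, where the right-hand side evaluates to $2$ while the left-hand side is $1$. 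With the correct convention everything you need does hold: the extended Pascal rule is valid for every integer upper index (it reduces to the Laurent-polynomial identity $q^{k}[N-k]_q+q^{k-N}[k]_q=[N]_q$), the extended coefficients agree with the paper's convention whenever $l\geq 0$ (the numerator acquires the factor $[0]_q$ as soon as $a>l\geq 0$), and the base case is immediate. So the step you defer as ``the most technical part'' is genuinely short, but it must be written out, with the convention pinned down, before your proof is complete.
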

   \begin{proof}
     The proof is done by induction on the cardinal of $\#X + \# Y$. If $\#X +\#Y=0$, the relation reads $1=1$. For the induction, we need to be careful and stay in the case where $\#X \geq \#Y$. Suppose that $\#X -\#Y \geq 1$, then removing the smallest element of $X\sqcup Y$ gives us two sets $X'$ and $Y'$ such that $\#X' -\#Y' \geq 0$. 
Let us now consider the extreme case, where  $\#X = \# Y \geq 1$. We can distinguish two situations:
\begin{itemize}
\item The lowest (or the highest) element of $X\sqcup Y$ is in $Y$, then removing this element gives us two sets $X'$ and $Y'$ such that $\#X -\#Y = 1$.
\item The lowest element and the highest element of $X\sqcup Y$ are in $Y$, in this case we can find an element $k$ in $\llbracket 1,M \rrbracket$ such that, if we define $X':= X\cap \llbracket 1,k \rrbracket$, $Y':= Y\cap \llbracket 1,k \rrbracket$, $X'':= X\cap \llbracket k+1,M \rrbracket$, $Y'':= Y\cap \llbracket k+1,M \rrbracket$, we have $\#X' = \#Y'$, $\#X'' = \#Y''$ and none of these sets is empty. 
\end{itemize}
We first suppose that $\#X -\#Y=l \geq 1$ and the lowest element $t$ of $X\sqcup Y$ is in $X$. Let us write $X':= X\setminus \{t\}$.  We have:
\begin{align*} 
&\sum_{\substack{X= X_1 \sqcup X_2 \\ \#X_1 = k_1}}q^{d(X_1\sqcup X_2)+ d(Y\sqcup X_1)} \\
&\qquad = \sum_{\substack{X= X_1 \sqcup X_2 \\ \#X_1 = k_1 \\ t\in X_1}}q^{d(X_1\sqcup X_2)+ d(Y\sqcup X_1)} + \sum_{\substack{X= X_1 \sqcup X_2 \\ \#X_1 = k_1 \\ t\in X_2}}q^{d(X_1\sqcup X_2)+ d(Y\sqcup X_1)} \\
&\qquad = \sum_{\substack{X'= X'_1 \sqcup X'_2 \\ \#X'_1 = k_1 -1 }}q^{d(X'_1\sqcup X'_2)+ d(Y\sqcup X'_1) + (\#X -k_1 ) - (\#X -l)} + \sum_{\substack{X'= X'_1 \sqcup X'_2 \\ \#X_1 = k_1}}q^{d(X'_1\sqcup X'_2)+ d(Y\sqcup X'_1) - k_1} \\
&\qquad = \sum_{j_2= 0}^{\infty}q^{l-k_{1}}\qbinb{l-1}{k_1 -1 - j_2}{l-1-(k_1-1)+j_2}\sum_{\substack{Y= Y_1 \sqcup Y_2 \\ \#Y_2 = j_2}}q^{ d(Y_1\sqcup Y_2) + d(Y_2\sqcup X')} \\
&\qquad \qquad +  \sum_{j_2= 0}^{\infty}q^{-k_1}\qbinb{l-1}{k_1 - j_2}{l-1-k_1+j_2}\sum_{\substack{Y= Y_1 \sqcup Y_2 \\ \#Y_2 = j_2}}q^{ d(Y_1\sqcup Y_2) + d(Y_2\sqcup X')} \\
&\qquad = \sum_{j_2= 0}^{\infty}\left(q^{l-k_1}\qbinb{l-1}{k_1 -1 - j_2}{l-k_1+j_2} + q^{-k_1}\qbinb{l-1}{k_1 - j_2}{l-1-k_1+j_2} \right)\\
&\qquad \qquad \quad \cdot\sum_{\substack{Y= Y_1 \sqcup Y_2 \\ \#Y_2 = j_2}}q^{ d(Y_1\sqcup Y_2) + d(Y_2\sqcup X')} \\
&\qquad = \sum_{j_2= 0}^{\infty}q^{-j_2}\qbinb{l}{k_1 - j_2}{l-k_1+j_2} \sum_{\substack{Y= Y_1 \sqcup Y_2 \\ \#Y_2 = j_2}}q^{ d(Y_1\sqcup Y_2) + d(Y_2\sqcup X')}\\
&\qquad = \sum_{j_2= 0}^{\infty}\qbinb{l}{k_1 - j_2}{l-k_1+j_2} \sum_{\substack{Y= Y_1 \sqcup Y_2 \\ \#Y_2 = j_2}}q^{ d(Y_1\sqcup Y_2) + d(Y_2\sqcup X)}
\end{align*}
The case were the greatest element of $X\sqcup Y$ is in $X$ is analogue.
We suppose now that $\#X -\#Y=l \geq 0$ and the lowest element $t$ of $X\sqcup Y$ is in $Y$. Let us write $Y':= Y\setminus \{t\}$.  We have:
\begin{align*}
&\sum_{j_2= 0}^{\infty}\qbinb{l}{k_1 - j_2}{l-k_1+j_2}\sum_{\substack{Y= Y_1 \sqcup Y_2 \\ \#Y_2 = j_2}}q^{ d(Y_1\sqcup Y_2) + d(Y_2\sqcup X)} \\
&\qquad = \sum_{j_2= 0}^{\infty}\qbinb{l}{k_1 - j_2}{l-k_1+j_2}\sum_{\substack{Y= Y_1 \sqcup Y_2 \\ \#Y_2 = j_2 \\ t\in Y_1}}q^{ d(Y_1\sqcup Y_2) + d(Y_2\sqcup X)} \\
&\qquad \qquad +  \sum_{j_2= 0}^{\infty}\qbinb{l}{k_1 - j_2}{l-k_1+j_2}\sum_{\substack{Y= Y_1 \sqcup Y_2 \\ \#Y_2 = j_2 \\ t\in Y_2}}q^{ d(Y_1\sqcup Y_2) + d(Y_2\sqcup X)} \\
&\qquad = \sum_{j_2= 0}^{\infty}\qbinb{l}{k_1 - j_2}{l-k_1+j_2}\sum_{\substack{Y'= Y'_1 \sqcup Y'_2 \\ \#Y'_2 = j_2 }}q^{ d(Y'_1\sqcup Y'_2) + d(Y'_2\sqcup X) + j_2} \\
&\qquad \qquad +  \sum_{j_2= 0}^{\infty}\qbinb{l}{k_1 - j_2}{l-k_1+j_2}\sum_{\substack{Y= Y'_1 \sqcup Y'_2 \\ \#Y'_2 = j_2 -1 }}q^{ d(Y'_1\sqcup Y'_2) + d(Y'_2\sqcup X) - (\#X - l -j_2) +\# X} \\
&\qquad = \sum_{j_2= 0}^{\infty}\qbinb{l}{k_1 - j_2}{l-k_1+j_2}\sum_{\substack{Y'= Y'_1 \sqcup Y'_2 \\ \#Y'_2 = j_2 }}q^{ d(Y'_1\sqcup Y'_2) + d(Y'_2\sqcup X) + j_2} \\
&\qquad \qquad +  \sum_{j_2= 0}^{\infty}\qbinb{l}{k_1 - (j_2+1)}{l-k_1+(j_2+1)}\sum_{\substack{Y'= Y'_1 \sqcup Y'_2 \\ \#Y'_2 = j_2 }}q^{ d(Y'_1\sqcup Y'_2) + d(Y'_2\sqcup X) +l +j_2+1} \\
&\qquad = \sum_{j_2= 0}^{\infty}\left(q^{j_2}\qbinb{l}{k_1 - j_2}{l-k_1+j_2} +  q^{l+j_2+1 }\qbinb{l}{k_1 - (j_2+1)}{l-k_1+(j_2+1)} \right) \\
&\qquad \qquad \qquad \cdot \sum_{\substack{Y= Y_1 \sqcup Y_2 \\ \#Y'_2 = j_2 }}q^{ d(Y_1\sqcup Y_2) + d(Y_2\sqcup X) + l +j_2+1} \\
&\qquad = \sum_{j_2= 0}^{\infty}q^{k_1}\qbinb{l+1}{k_1 - j_2}{l+1-k_1+j_2} \sum_{\substack{Y= Y_1 \sqcup Y_2 \\ \#Y'_2 = j_2 }}q^{ d(Y_1\sqcup Y_2) + d(Y_2\sqcup X)+  l +j_2+1} \\
&\quad = \sum_{\substack{X= X_1 \sqcup X_2 \\ \#X_1 = k_1}}q^{d(X_1\sqcup X_2)+ d(Y'\sqcup X_1)+k_1}\\
&\quad = \sum_{\substack{X= X_1 \sqcup X_2 \\ \#X_1 = k_1}}q^{d(X_1\sqcup X_2)+ d(Y\sqcup X_1)}
\end{align*}
The case were the greatest element of $X\sqcup Y$ is in $Y$ is analogue.

We suppose now that the greatest and the lowest element of $X\sqcup Y$ are in $X$ and that $l=0$. We use the notations explained before and we write $k'= \#X' = \# Y'$ and $k''= \#X'' = \# Y''$. For readability it is convenient to set $l'=l''= 0$. We have:
\begin{align*}
&\sum_{j_2= 0}^{\infty}\qbinb{l}{k_1 - j_2}{l-k_1+j_2}\sum_{\substack{Y= Y_1 \sqcup Y_2 \\ \#Y_2 = j_2}}q^{ d(Y_1\sqcup Y_2) + d(Y_2\sqcup X)}  = \sum_{\substack{Y= Y_1 \sqcup Y_2 \\ \#Y_2 = k_1}}q^{ d(Y_1\sqcup Y_2) + d(Y_2\sqcup X)} \\
&= \sum_{k'_1 + k''_1 = k_1}\sum_{\substack{Y'= Y'_1 \sqcup Y'_2 \\ \#Y'_2 = k'_1 \\Y''=Y''_1 \sqcup Y''_2 \\ \#Y''_2 = k''_1 }}q^{ d(Y'_1\sqcup Y'_2) + d(Y'_2\sqcup X') + d(Y''_1\sqcup Y''_2) + d(Y''_2\sqcup X'') +(k'-k'_1) k''_1 - k'_1(k''-k''_1) + k'_1k'' - k'k''_1} \\
& = \sum_{k'_1 + k''_1 = k_1}\sum_{\substack{Y'= Y'_1 \sqcup Y'_2 \\ \#Y'_2 = k'_1 \\Y''=Y''_1 \sqcup Y''_2 \\ \#Y''_2 = k''_1 }}q^{ d(Y'_1\sqcup Y'_2) + d(Y'_2\sqcup X') + d(Y''_1\sqcup Y''_2) + d(Y''_2\sqcup X'')} \\
& = \sum_{k'_1 + k''_1 = k_1}\left(\sum_{\substack{Y'= Y'_1 \sqcup Y'_2 \\ \#Y'_2 = k'_1}} q^{ d(Y'_1\sqcup Y'_2) + d(Y'_2\sqcup X')} \right)\left( \sum_{\substack{\\Y''=Y''_1 \sqcup Y''_2 \\ \#Y''_2 = k''_1 }}q^{ d(Y''_1\sqcup Y''_2) + d(Y''_2\sqcup X'')} \right) \\
& = \sum_{k'_1 + k''_1 = k_1}\left(\sum_{j'_2=0}^\infty \qbinb{l'}{k'_1-j'_2}{ l'- k'_1 +j'_2}\sum_{\substack{Y'= Y'_1 \sqcup Y'_2 \\ \#Y'_2 = j'_2}}q^{ d(Y'_1\sqcup Y'_2) + d(Y'_2\sqcup X')} \right)\\
& \qquad \left(\sum_{j''_2=0}^\infty \qbinb{l''}{k''_1-j''_2}{ l''- k''_1 +j''_2} \sum_{\substack{\\Y''=Y''_1 \sqcup Y''_2 \\ \#Y''_2 = j''_2 }}q^{d(Y''_1\sqcup Y''_2) + d(Y''_2\sqcup X'')} \right)\\
& = \sum_{k'_1 + k''_1 = k_1}\left(\sum_{\substack{X= X'_1 \sqcup X'_2 \\ \#X'_1 = k'_1}}q^{d(X'_1\sqcup X'_2)+ d(Y'\sqcup X'_1)} \right)\left( \sum_{\substack{X''= X''_1 \sqcup X''_2 \\ \#X''_1 = k''_1}}q^{d(X''_1\sqcup X''_2)+ d(Y''\sqcup X''_1)} \right) \\
& = \sum_{k'_1 + k''_1 = k_1}\sum_{\substack{X= X'_1 \sqcup X'_2 \\ \#X'_1 = k'_1\\ X''= X''_1 \sqcup X''_2 \\ \#X''_1 = k''_1}}q^{d(X'_1\sqcup X'_2)+ d(Y'\sqcup X'_1) + d(X''_1\sqcup X''_2)+ d(Y''\sqcup X''_1)} \\
& = \sum_{k'_1 + k''_1 = k_1}\sum_{\substack{X= X'_1 \sqcup X'_2 \\ \#X'_1 = k'_1\\ X''= X''_1 \sqcup X''_2 \\ \#X''_1 = k''_1}}q^{d(X'_1\sqcup X'_2)+ d(Y'\sqcup X'_1) + d(X''_1\sqcup X''_2)+ d(Y''\sqcup X''_1)+ k'_1 (k''-k''_1) -(k'-k'_1)k''_1 +  k'k''_1 -k'_1k''} \\
& = \sum_{\substack{X= X_1 \sqcup X_2 \\ \#X_1 = k_1}}q^{d(X_1\sqcup X_2)+ d(Y\sqcup X_1)} 
\end{align*}
   \end{proof}

\section{Checking the skein relations}
\label{sec:check-skein-relat}

\begin{lem}
  Let $\Gamma$ be a web, and $\Gamma'$ the web obtained by deleting all the edges labelled by $0$ or $N$. We have:
\[ \kupcs{\Gamma} =\kupcs{\Gamma'}.\]
\end{lem}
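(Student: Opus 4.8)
The plan is to argue by induction on the number of edges of $\Gamma$ carrying a label in $\{0,N\}$, deleting them one at a time; since finitely many such deletions turn $\Gamma$ into $\Gamma'$, it suffices to prove that removing a single edge $e$ with $\lambda(e)\in\{0,N\}$ (and smoothing the two bivalent vertices it leaves behind) does not change $\kupcs{\cdot}$. Using the equivalence invariance of $\kupcs{\cdot}$ recorded in Remark~\ref{rmk:eqwebcoloring}, I would first reverse the orientation of $e$ when $\lambda(e)=N$, replacing its label by $N-N=0$; this reduces everything to the case $\lambda(e)=0$. Write $\Gamma_1$ for the web obtained from $\Gamma$ by erasing such an edge $e$ and concatenating, at each endpoint, the two edges that survive.

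The first key observation is local and concerns states. If $\lambda(e)=0$ then every coloring $c$ of $\Gamma$ has $c(e)=\emptyset$ (the unique subset of $\cN$ of cardinality $0$), so for every bicolor $b$ one has $\#(c(e)\cap b)=0\neq 1$; by Definition~\ref{dfn:state}, $e$ is erased in the state $(\Gamma,c)_b$ for every $b$. Thus $e$ never contributes to any circle of any state.

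The second step is to set up a degree-preserving bijection between colorings of $\Gamma$ and colorings of $\Gamma_1$. Given a coloring $c$ of $\Gamma$, I would analyse the two endpoints $u,v$ of $e$: each is a trivalent vertex incident to $e$ and to two further edges, say $f_1,f_2$ at $u$. Writing out the vertex condition (V) of Definition~\ref{dfn:colofwebs} with $c(e)=\emptyset$ forces $c(f_1)=c(f_2)$ together with matching labels and coherent orientations (this is exactly the flow picture of Remark~\ref{rmk:flow}), so that erasing $e$ and concatenating $f_1,f_2$ produces a genuine edge of a web $\Gamma_1$ carrying a well-defined color. Restricting $c$ to the edges of $\Gamma_1$ then yields a coloring $c_1$, and conversely any coloring of $\Gamma_1$ extends uniquely to $\Gamma$ by reinstating $e$ with the empty color; these operations are mutually inverse. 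Vertex-less loops labelled $0$ or $N$, together with the degenerate situations where both endpoints of $e$ coincide or where a vertex has all its incident edges labelled in $\{0,N\}$, are treated directly and account for the vertex-removal mentioned in the footnote to Remark~\ref{rmk:eqwebcoloring}.

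Finally, I would check that corresponding colorings have equal degree. Fix a bicolor $b$. By the first observation, $e$ is absent from both $(\Gamma,c)_b$ and $(\Gamma_1,c_1)_b$, and the smoothing of the two bivalent vertices is an isotopy of the plane near $e$; hence the two states are isotopic collections of oriented circles and have the same signed (algebraic) number of circles, i.e. $d(\Gamma,c)_b=d(\Gamma_1,c_1)_b$. Summing over all bicolors gives $d(\Gamma,c)=d(\Gamma_1,c_1)$, and summing $q^{d}$ over all colorings yields $\kupcs{\Gamma}=\kupcs{\Gamma_1}$; the induction then gives $\kupcs{\Gamma}=\kupcs{\Gamma'}$. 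The step requiring the most care is the local gluing of the colored web at the endpoints of $e$---verifying that the labels and orientations of $f_1,f_2$ always match, so that $\Gamma_1$ is again a web and the coloring bijection is well defined---rather than the degree bookkeeping, which becomes immediate once the states are seen to be unchanged.
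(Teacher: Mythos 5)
Your proposal is correct and is essentially the paper's own argument: both rest on the observations that an edge labelled $0$ (resp.\ $N$) can only be coloured by $\varnothing$ (resp.\ $\llbracket 1,N\rrbracket$), which gives the bijection between colorings of $\Gamma$ and $\Gamma'$, and that such edges are erased in every state for every bicolor, so corresponding colorings have equal degree and the two evaluations agree. The paper does this in one step, globally and for both labels at once, so your induction and the reduction of label $N$ to label $0$ are unnecessary scaffolding; the only imprecision is your claim that condition (V) forces ``coherent orientations'' on the two surviving edges at an endpoint of $e$ (they may both point toward the vertex, carrying complementary colors, so the smoothing needs one more equivalence move), but this does not affect the coloring bijection or the states, hence not the argument.
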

\begin{proof}
  We clearly have a one-one correspondence between the colorings of $\Gamma$ and of $\Gamma'$ since an edge labeled $0$ can only be colored by $\varnothing$ and an edge labeled $N$ can only be colored by $\cN$. Furthermore, the edges of $\Gamma$ labeled by $0$ and $N$ never appear in any state for any coloring, this means that the state $(\Gamma,c)_b$ and $(\Gamma', c')_b$ are equal (where $c$ and $c'$ are two  colorings corresponding one to another). This proves that $\kupcs{\Gamma} =\kupcs{\Gamma'}$. 
\end{proof}

\begin{lem}
  It is enough to check relations (\ref{eq:relass}) and (\ref{eq:relsquare3}).
\end{lem}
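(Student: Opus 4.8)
The plan is to show that, once multiplicativity of $\kupcs{\cdot}$ under disjoint union is noted (this is immediate from the definition, since a coloring of a disjoint union is a pair of colorings and the circle counts in every state split over the two components), the five remaining relations (\ref{eq:relcircle}), (\ref{eq:relbin1}), (\ref{eq:relbin2}), (\ref{eq:relsquare1}) and (\ref{eq:relsquare2}) are \emph{formal} consequences of (\ref{eq:relass}) and (\ref{eq:relsquare3}). All the reductions below use only planar isotopy, deletion of edges labelled $0$ or $N$ (the previous lemma), re-association of trivalent vertices through (\ref{eq:relass}), and specialization of the four labels $m,n,k,l$ in (\ref{eq:relsquare3}). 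Since every such operation is valid for \emph{any} invariant obeying (\ref{eq:relass}) and (\ref{eq:relsquare3}), proving these relations for $\kupcs{\cdot}$ amounts exactly to checking those two, which is the content of the statement; by Wu's characterization \cite{pre06302580} this is then enough to identify $\kupcs{\cdot}$ with $\kups{\cdot}$.

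First I would dispose of the two auxiliary square relations. Relations (\ref{eq:relsquare1}) and (\ref{eq:relsquare2}) are obtained from (\ref{eq:relsquare3}) by substituting for $m,n,k,l$ the labels read off from $\kup{\squarea}$ and $\kup{\squareb}$. After the substitution the sum $\sum_{j}\qbinb{l}{k-j}{l-k+j}$, which is a priori infinite, has only finitely many nonzero terms, and the vanishing conventions for the quantum binomial collapse it to the stated right-hand sides: a single identity term together with the $[N-m-1]_q$ term for (\ref{eq:relsquare1}), and the two binomials $\qbinb{m-1}{n}{m-1-n}$ and $\qbinb{m-1}{n-1}{m-n}$ for (\ref{eq:relsquare2}). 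The only non-mechanical point is recognizing the specialized squares as the pictures $\kup{\twoverta}$, $\kup{\doubleYa}$, $\kup{\bigHb}$ and $\kup{\doubleYb}$, for which one re-reads the four trivalent vertices using (\ref{eq:relass}).

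Next come the two digon relations and the circle. The digons $\kup{\digona}$ and $\kup{\digonb}$ arise by degenerating a square: choosing the labels in (\ref{eq:relsquare3}) so that a column of $\kup{\squarec}$ carries a label forcing an edge to be $0$ (or $N$) and deleting it by the previous lemma leaves, after one application of (\ref{eq:relass}), exactly a digon, while $\kup{\squared}$ degenerates to the single strand $\kup{\verta}$ or $\kup{\vertb}$; the sum then collapses to the single coefficient $\qbinb{m+n}{m}{n}$ for (\ref{eq:relbin1}) and $\qbinb{N-m}{n}{N-m-n}$ for (\ref{eq:relbin2}). Where a single specialization does not suffice, one proceeds by induction on $m+n$, peeling off one strand at a time via the same square move. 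Finally, the circle relation (\ref{eq:relcircle}) is the case $m=0$ of (\ref{eq:relbin2}): then the leg $\kup{\vertb}$ and the two $m$-labelled external edges of $\kup{\digonb}$ carry label $0$, so deleting them turns the digon into a vertex-less $n$-circle, and the right-hand side becomes $\qbinb{N}{n}{N-n}$ times the empty web, that is $\binom{N}{n}_q$.

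The main obstacle is precisely the bookkeeping in the digon step: one must check that a specialization of the eight labels of $\kup{\squarec}$ and $\kup{\squared}$ genuinely produces a digon rather than some other degeneration (a triangle, an $\kup{\bigHb}$, or a bare trivalent vertex all occur for nearby choices), which is why both (\ref{eq:relass}) and the edge-deletion lemma are indispensable for absorbing the leftover $0$- and $N$-labelled edges and the resulting bivalent vertices. Keeping the orientations and the complementation $\lambda(e)\mapsto N-\lambda(e)$ of Definition~\ref{dfn:MOYGraph} consistent throughout, and verifying that the finite quantum-binomial sums telescope to the claimed coefficients, is routine but delicate; none of it is conceptually deep once the correct specializations are identified.
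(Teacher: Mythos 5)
Your proposal is correct and takes essentially the same approach as the paper: all the remaining relations are derived formally from (\ref{eq:relass}) and (\ref{eq:relsquare3}) by specializing labels, deleting $0$- and $N$-labelled edges, and using the orientation-reversal/label-complementation equivalence of $\sll_N$-webs, after which Wu's characterization applies. The only immaterial difference is the routing --- the paper derives (\ref{eq:relsquare1}) from (\ref{eq:relsquare2}), and both (\ref{eq:relcircle}) and (\ref{eq:relbin2}) from (\ref{eq:relbin1}), rather than everything directly from (\ref{eq:relsquare3}) --- and in each case a single specialization suffices (the terms of the $j$-sum die because the graphs acquire negative labels and hence admit no colorings), so your fallback induction on $m+n$ is never needed.
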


\begin{proof}
We prove that (\ref{eq:relcircle}) and (\ref{eq:relbin2}) follow from (\ref{eq:relbin1}). We suppose that relation (\ref{eq:relbin1}) holds. 
 We have
\[   \kupcs{\vcenter{\hbox{\tikz[scale= 0.5]{\draw[->] (0,0) arc(0:360:1cm) node[right] {\small{$\!k\!$}};}}}}= 
\kupc{\digonNk} = \qbinb{N}{k}{N-k}.
 \]
This proves relation (\ref{eq:relcircle}) holds.
\begin{align*}
\kupc{\digonb} &= \kupc{\digonc} = \qbinb{N-m}{n}{N-m -n} \kupc{\vertc} \\&= \qbinb{N-m}{n}{ N-m -n} \kupc{\vertb}
\end{align*}
This proves relation (\ref{eq:relbin2}) holds.

We prove that relations (\ref{eq:relbin1}), (\ref{eq:relsquare1}) and (\ref{eq:relsquare2}) follow from (\ref{eq:relsquare3}). We suppose that (\ref{eq:relsquare3}) holds. 

Relation (\ref{eq:relsquare2}) is a special case of relation (\ref{eq:relsquare3}): by setting $n= 1$, $m= l'$, $k= l'+n' -1$ and $l= m'-1$ in (\ref{eq:relsquare3}), we obtain (\ref{eq:relsquare2})  with all the labels replaced by labels with~$'$.

Relation (\ref{eq:relsquare1}) is a special case of relation (\ref{eq:relsquare1}): by setting $l= 1$, $m= N-m'$, and $n= N-m'-1$ in (\ref{eq:relsquare2}), we obtain (\ref{eq:relsquare1})  with all the labels replaced by labels with~$'$.

Relation (\ref{eq:relbin1}) is a special case of relation (\ref{eq:relsquare3}): by setting $m=n= 0$, $l= m'+n'$  and $k= m'$ in (\ref{eq:relsquare3}), we obtain (\ref{eq:relbin1})  with all the labels replaced by labels with~$'$.
\end{proof}

\begin{lem}
  \label{lem:associativity}
  The following relation holds:
  \[\kupc{\stgamma}^{c_\partial} = \kupc{\stgammaprime}^{c_\partial}\]
where $c_\partial$ is any coloring of the boundary (see footnote on page~\pageref{page:footnotecolbdy}).
\end{lem}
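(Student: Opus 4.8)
The plan is to fix the boundary coloring $c_\partial$ and to compare the two evaluations term by term. Write $A$ for the color of the bottom edge (so $\#A = i+j+k$) and $S_1, S_2, S_3$ for the colors of the three top edges read left to right (so $\#S_1 = i$, $\#S_2 = j$, $\#S_3 = k$). The first step is to observe that each of the two webs is a tree, so the vertex condition (V) forces the color of its unique internal edge: in \stgamma{} that color must be $S_2 \sqcup S_3$, while in \stgammaprime{} it must be $S_1 \sqcup S_2$. Hence each web admits a coloring inducing $c_\partial$ if and only if $S_1, S_2, S_3$ are pairwise disjoint with $A = S_1 \sqcup S_2 \sqcup S_3$, and in that case the coloring is unique. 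When $c_\partial$ is not of this form both sums are empty and the identity reads $0 = 0$, so I may assume $c_\partial$ admissible and am reduced to proving that the two resulting colorings $c, c'$ have equal degree.

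Next I would analyze the states. Since both webs are trees, for each color $x \in \cN$ the edges containing $x$ form the unique path from the bottom leaf to whichever top leaf carries $x$, and this top leaf is the same in both webs, being dictated by $c_\partial$. In particular no state contains a circle, and there is a single bottom point so no state contains a bottom-bottom arc. Fixing a bicolor $b = \{\beta_-, \beta_+\}$, the state $(\Gamma, c)_b$ is the symmetric difference of the two color-paths of $\beta_-$ and $\beta_+$: it is empty when $\beta_-$ and $\beta_+$ reach the same leaf or both avoid $A$; it is a single top-bottom arc when exactly one of them lies in $A$; and it is a single top-top arc joining the leaf of $\beta_-$ to the leaf of $\beta_+$ when both lie in $A$ on distinct leaves.

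The crucial point is then that the contribution of each bicolor to the degree depends only on this boundary data. By Definition~\ref{dfn:stateopen} a state with no circles and no bottom-bottom arcs contributes $0$ unless it is a top-top arc, in which case it contributes $+\tfrac12$ or $-\tfrac12$ according to orientation; and the reversal rule (reverse exactly the edges containing $\beta_-$) orients this arc from the leaf of $\beta_-$ to the leaf of $\beta_+$, so its rightward/leftward type is governed solely by the left-right order of these two top leaves. Since this order, like the leaf reached by each color, is read off from $c_\partial$ alone and is insensitive to the internal routing through $V_1$ and $V_2$, every bicolor contributes identically to $d(\stgamma, c)$ and to $d(\stgammaprime, c')$. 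Summing over all bicolors gives $d(\stgamma, c) = d(\stgammaprime, c')$, whence $\kupc{\stgamma}^{c_\partial} = \kupc{\stgammaprime}^{c_\partial}$.

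I expect the main obstacle to be the orientation bookkeeping: one must check that erasing the doubly-colored common segment and reversing the $\beta_-$-edges really does yield a single arc traversed from the $\beta_-$-leaf to the $\beta_+$-leaf (and not the reverse), and confirm that no circle can arise, so that each bicolor's contribution is genuinely a function of its two boundary endpoints. Once this is pinned down the equality is forced, because the two webs share both their boundary and their color-to-leaf assignment; I would, if desired, corroborate it by running through the small number of cases for the positions of $\beta_-$ and $\beta_+$ among the three top leaves.
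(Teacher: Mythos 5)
Your proof is correct and takes essentially the same route as the paper: fix $c_\partial$, observe that an extension exists (and is then unique) if and only if the three top colors partition the bottom color, dispose of the non-admissible case as $0=0$, and prove the key equality $d(\Gamma,c)_b = d(\Gamma',c')_b$ for every bicolor $b$. The only difference is cosmetic: where the paper verifies this per-bicolor equality by an explicit table of all states, you deduce it from the uniform observation that in these tree-shaped webs every state is a single arc (no circles, no bottom-bottom arcs) whose endpoints and orientation---hence its contribution of $0$ or $\pm\tfrac12$---are read off from $c_\partial$ alone, which is exactly what the paper's table exhibits case by case.
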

\begin{proof}

    Let $\Gamma$ be the open web on the left and $\Gamma'$ the open web on the right. The boundary consists of four points, $\tau_1, \tau_2$ and $\tau_3$ on the top and $\beta$ on the bottom. Let us consider a coloring $c_\beta$ of the boundary (that is any application $\{\tau_1, \tau_2, \tau_3, \beta \} \to \cN$ such that $\#c_\partial(\tau_1)= i$, $\#c_\partial(\tau_2)= j$, $\#c_\partial(\tau_3)= k$ and $\#c_\partial(\beta)= i+j+k$. 

Suppose $X_1:=c_\partial(\tau_1)$,  $X_2:=c_\partial(\tau_2)$ and $X_3:=c_\partial(\tau_3)$  form a partition of $c_\partial(\beta)$. Then there exists a unique coloring $c$ of $\Gamma$ and a unique coloring $c'$ of $\Gamma'$ compatible with $c_\partial$. We need to compare the degree $d(\Gamma,c)$ and $d(\Gamma', c')$. The values of  $d(\Gamma,c)_b$ and $d(\Gamma', c')_b$ for every bicolor $b$ are given in Table~\ref{table:associ}. We have
$d(\Gamma,c)_b =d(\Gamma', c')_b$ for all $b$. Hence $d(\Gamma,c) =d(\Gamma', c')$ and $\kupcs{\Gamma}^{c_\partial} =
\kupcs{\Gamma'}^{c_\partial}$.

Suppose $c_\partial(\tau_1)$,  $c_\partial(\tau_2)$ and $c_\partial(\tau_3)$  do not form a partition of $c_\partial(\beta)$. Then there is no coloring of $\Gamma$ inducing $c_\partial$ and no coloring of $\Gamma'$ inducing $c_\partial$. Hence we have $\kupcs{\Gamma}^{c_\partial} =
\kupcs{\Gamma'}^{c_\partial}=0$. 

Finally we have   \[\kupc{\stgamma}^{c_\partial} = \kupc{\stgammaprime}^{c_\partial}\]
for all colorings $c_\partial$ of the boundary.
\newcommand{\x}{\cellcolor{blue!25}}
\begin{table}[ht]
  \centering
  \begin{tabular}{|c|c|c|c|c|}
    \hline
    $\!\!b_-\backslash b_+\!\!$ & $X_1$        & $X_2$      & $ X_3$       & $R$      \\ \hline
    $X_1$               & \stnoa       & \stonetwoa & \stonethreea & \stdonea  \\ \hline
    $X_2$               & \sttwoonea   & \stnoa     & \sttwothreea & \stdtwoa  \\ \hline
    $X_3$               & \x\stthreeonea & \stthreetwoa & \stnoa       & \stdthreea \\ \hline
    $R$                 & \stonea     & \sttwoa   & \stthreea   & \stnoa \\ \hline
  \end{tabular}\quad 
  \begin{tabular}{|c|c|c|c|c|}
    \hline
    $\!\!b_-\backslash b_+\!\!$ & $X_1$        & $X_2$      & $ X_3$       & $R$      \\ \hline
    $X_1$               & \stnob       & \stonetwob & \stonethreeb & \stdoneb  \\ \hline
    $X_2$               & \sttwooneb   & \stnob     & \sttwothreeb & \stdtwob  \\ \hline
    $X_3$               & \stthreeoneb & \stthreetwob & \stnob       & \stdthreeb \\ \hline
    $R$                 & \stoneb     & \sttwob   & \stthreeb   & \stnob \\ \hline
  \end{tabular}

\caption{Computations of $d(\Gamma, c)_b$ (on the left) and $d(\Gamma', c')_b$ (on the right). How to read these tables: The diagram contained in the blue cell, is the state $(\Gamma,c)_{b}$ when $b_+$ is in $X_1$ and $b_-$ is in $X_3$. The number $+\frac12$ contained in the blue cell is $d(\Gamma,c)_b$. How to check these tables: There is an anti-symmetry property: if one transposed the table, one should obtain the same diagram with the orientation reversed and the degree should be multiplied by $-1$. There is a consistency property for each line and each column: If two diagrams are on the same line or on the same column and have a common solid edge, the orientations of this edge on the two diagrams should be the same.}
\label{table:associ}
\end{table}

\end{proof}

 \begin{lem}
   \label{lem:square}
   The following relation holds:
\[
\kupc{\squarec}^{c_\partial}\!\!\! = \sum_{j=0}^\infty\qbinb{l}{k-j}{l-k+j} \kupc{\squared}^{c_\partial}
\]
where $c_\partial$ is any coloring of the boundary. 
 \end{lem}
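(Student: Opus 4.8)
The plan is to prove this exactly in the spirit of Lemma~\ref{lem:associativity}: fix a coloring $c_\partial$ of the common boundary, enumerate the colorings of each web inducing $c_\partial$, compare their degrees by means of Lemma~\ref{lem:additivityofdegree}, and reduce the resulting $q$-identity to the technical Lemma~\ref{lem:key}. Indeed, relation~(\ref{eq:relsquare3}) is precisely the ``square switch'' from which the other skein relations were already deduced, so one expects its proof to be where Lemma~\ref{lem:key} is consumed.

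First I would record the boundary data. A coloring $c_\partial$ of the common boundary amounts to four subsets of $\cN$: the colours $\beta_1,\beta_2$ of the two bottom edges (of cardinalities $n$ and $m+l$) and the colours $\tau_1,\tau_2$ of the two top edges (of cardinalities $m$ and $n+l$). If some colour enters from the bottom a different number of times than it leaves from the top, then neither web admits a coloring inducing $c_\partial$ and both sides of the claimed identity vanish; so I may assume this global flow condition holds. Next I would read off the vertex condition (V) at the four corners of the central square. On the left-hand web every internal colour is then determined by the colour $\gamma$ of the bottom rung, a set of cardinality $k$, and the admissibility of $\gamma$ reduces to requiring that it be a size-$k$ subset of a fixed set $X\subseteq\cN$ depending only on $c_\partial$. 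Dually, for each $j$ the colorings of the right-hand web are determined by the colour of its upper rung, a size-$j$ subset $Y_2$ of a fixed set $Y\subseteq\cN$, where $\#X=\#Y+l$. Thus the left-hand sum runs over partitions $X=X_1\sqcup X_2$ with $\#X_1=k$ (taking $X_1=\gamma$), while the inner sum on the right runs over partitions $Y=Y_1\sqcup Y_2$ with $\#Y_2=j$.

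Then I would compute degrees. By Lemma~\ref{lem:additivityofdegree} the degree can be read off layer by layer. Outside the central square both webs consist of straight vertical strands, whose states are arcs running from bottom to top; these contribute $0$ and are the same on the two sides. The degree of the central square, summed over bicolors, counts inversions between the colours routed to the left and those routed to the right across each rung, so that by the very definition of $d(\cdot\sqcup\cdot)$ (Definition~\ref{dfn:degreepartition}) it equals $d(X_1\sqcup X_2)+d(Y\sqcup X_1)$ for the left-hand coloring with $X_1=\gamma$, and $d(Y_1\sqcup Y_2)+d(Y_2\sqcup X)$ for the right-hand coloring with upper rung $Y_2$ — possibly up to one additive constant depending only on $c_\partial$, which then cancels between the two sides. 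Substituting these expressions, the identity to be proved becomes exactly the statement of Lemma~\ref{lem:key} with $k_1=k$ and $j_2=j$, the weight $\qbinb{l}{k-j}{l-k+j}$ matching the $q$-binomial there term by term, which finishes the argument.

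The hard part will be this degree--partition dictionary. One must verify, bicolor by bicolor, that the algebraic count of oriented circles together with the $\tfrac12$-corrections for same-side arcs in the state of the central square really agrees with the inversion count of Definition~\ref{dfn:degreepartition}, for which Lemma~\ref{lem:degpartsep} is the natural tool to split the count across the two rungs. A second delicate point is to pin down the fixed sets $X$ and $Y$ explicitly and to check that the admissible rung colours are \emph{exactly} their size-$k$, respectively size-$j$, subsets, and that the portion of the degree coming from the colours routed straight through the square is genuinely identical on both sides so that it drops out. Once these two bookkeeping verifications are in place, the reduction to Lemma~\ref{lem:key} is immediate.
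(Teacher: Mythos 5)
Your plan follows the paper's actual proof: parametrize the colorings of each web extending $c_\partial$ by partitions of two fixed sets $X$ and $Y$ with $\#X=\#Y+l$, show the degree equals a constant $\delta(c_\partial)$ plus $d(X_1\sqcup X_2)+d(Y\sqcup X_1)$ (resp. $d(Y_1\sqcup Y_2)+d(Y_2\sqcup X)$), and invoke Lemma~\ref{lem:key}. However, the combinatorial dictionary you assert --- and defer as a ``bookkeeping verification'' --- is false in general, and this is precisely where the indices get shifted. A color $i\in\cN$ can traverse the square \emph{diagonally}: it may enter at the bottom-right vertex and exit at the top-left vertex, running through the rung (the class the paper calls $T$), or cross from bottom-left to top-right without touching the rung (the class $Z$). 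The class $T$ is completely forced by $c_\partial$ (it consists of the colors appearing at the bottom-right and top-left boundary points and at no others), so the color of the $k$-labelled rung is \emph{not} a free size-$k$ subset of a fixed set: it is $T\sqcup X_1$, where only $X_1\subseteq X$ is free, of cardinality $k-t$ with $t=\#T$. Likewise, on the right-hand webs the $j$-labelled rung is colored by $T\sqcup Y_2$ with $\#Y_2=j-t$; in particular $\Gamma_j$ admits no coloring at all when $j<t$, which your indexing cannot detect. The correct application of Lemma~\ref{lem:key} is therefore with $k_1=k-t$ and $j_2=j-t$, not $k_1=k$ and $j_2=j$. The $q$-binomials happen to agree because $\qbinb{l}{k_1-j_2}{l-k_1+j_2}$ depends only on $k_1-j_2=k-j$, but the sums you would feed into the lemma (over size-$k$ subsets on one side, size-$j$ subsets on the other) are simply not the sums that enumerate the colorings of the webs.

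The same omission breaks your degree computation. If you insist on taking ``$X_1=\gamma$'' with $\gamma=T\sqcup X_1$ the full rung color, then $d(\gamma\sqcup X_2)+d(Y\sqcup\gamma)$ differs from the paper's expression $d(X_1\sqcup X_2)+d(Y\sqcup X_1)$ by $d(T\sqcup X)-d(T\sqcup X_1)+d(Y\sqcup T)$, and the middle term $d(T\sqcup X_1)$ \emph{varies} with the choice of $X_1$; it cannot be absorbed into a constant $\delta(c_\partial)$. The repair is exactly the paper's bookkeeping: classify each color of $\cN$ by the set of edges it occupies, which for the left-hand web yields seven classes $C, X_1, X_2, Y, Z, T, R$ (straight-through, rung-crossing via the left or staying right, left-vertical only, and the two diagonal classes, plus unused colors); the cardinality constraints then give $\#X_1=k-t$, the bicolor-by-bicolor tables show the variable part of the degree is exactly $d(X_1\sqcup X_2)+d(Y\sqcup X_1)$ with $X_1$ the \emph{free} part only, and Lemma~\ref{lem:key} applies with the shifted parameters. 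So the architecture of your argument is sound and matches the paper, but as written the key step you flagged would fail whenever $t>0$.
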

 \begin{proof}
Let us denote by $\Gamma$ the open web on the left-hand side of the relation and by $(\Gamma_j)_{j\in \NN}$ the open webs on the right-hand side of the relation. 

Let $c$ be a coloring of $\Gamma$ and $i$ an element of $\cN$. We consider the set $E_i$ of edges $e$ of $\Gamma$ such that $i$ is in $c(e)$. Due to the flow condition on MOY graphs (see Remark~\ref{rmk:flow}), the following configurations $E_i$ are the only possible ones (the solid edges are in $E_i$ the others not):
\[
\twovertupupNAME{C},  \quad \zigzagrightupNAME{X_1}, \quad\onevertemptyupNAME{X_2}, \quad \onevertupemptyNAME{Y}, 
 \quad  \bottomlefttoprighthighNAME{Z}, \quad  \bottomrighttopleftlowNAME{T} \quad\textrm{and}\quad  \emptysquareNAME{R}.
\]

This gives us a partition of $\cN$ into 7 sets : $C$, $X_1$, $X_2$, $Y$, $Z$, $T$ and $R$. The coloring of the boundary induced by $c$ is:
\begin{itemize}
\item $C\sqcup Y \sqcup Z$ for the top right point,
\item $C\sqcup X_1 \sqcup X_2 \sqcup T$ for the top left point,
\item $C\sqcup Y \sqcup Z$ for the bottom right point,
\item $C\sqcup X_1\sqcup X_2 \sqcup Z$ for the bottom left point.
\end{itemize}
We have the following conditions on the cardinals of the sets $C$, $X_1$, $X_2$, $Y$, $Z$, $T$ and $R$ (we name them $c$, $x_1$, $x_2$, $y$, $z$, $t$ and $r$):
\begin{itemize}
\item $c+y+z = m$,
\item $c+y+t = n$,
\item $c+x_1 + x_2+t = n+l$,
\item $c+x_1 + x_2 +z = m+l $,
\item $x_1 + t = k $,
\item $c+r+ x_1 + x_2 + y+z+t+r =N$.
\end{itemize}
From this we easily deduce that we have:
\[
x_1 + x_2 -y =l,  \quad  x_1+x_2= n+l -c-t\quad \textrm{and} \quad  x_1 = k-t.
\]
If we are given a coloring of the boundary (which extends to a coloring of $\Gamma$), we can recover\footnote{For example the set $C$ is the intersection of the colorings of the four points and the set $X$ is the intersection of the coloring of the two points on the right minus the set $C$. }
$C$, $R$, $Y$, $Z$, $T$ and $X:=X_1\sqcup X_2$.

On the other hand, if we are given a partition of $\cN$ into $6$ sets $C$, $X$, $Y$, $Z$, $T$ and $R$ such that there cardinals $c$, $x$, $y$, $z$, $t$ and $r$ satisfy:
\begin{itemize}
\item $c+y+z = m$,
\item $c+y+t = n$,
\item $c+x+t = n+l$,
\item $c+x +z = m+l $
\item and $t \leq k $,
\end{itemize}
every partition of $X$ into two sets $X_1$ and $X_2$, such that $\#X_1 = k-t$ provides a coloring of $\Gamma$. This means, that there exist exactly $
\begin{pmatrix}
 n+l-c-t \\
k-t 
\end{pmatrix}
$ such colorings.

We now consider a coloring $c_j$ of the open web $\Gamma_j$. As we did for $\Gamma$, we can form a partition of $\cN$:

\[
\twovertupupNAME{C}, \quad   \onevertemptyupNAME{X}, \quad \onevertupemptyNAME{Y_1},  \quad 
 \zigzagleftupNAME{Y_2}, \quad, \bottomlefttoprightlowNAME{Z}, \quad  \bottomrighttoplefthighNAME{T} \quad\textrm{and}\quad   \emptysquareNAME{R}.
\]

The coloring of the boundary induced by $c_j$ is:
\begin{itemize}
\item $C\sqcup Y_1\sqcup Y_2 \sqcup Z$ for the top right point,
\item $C\sqcup X \sqcup T$ for the top left point,
\item $C\sqcup Y_1\sqcup Y_2 \sqcup Z$ for the bottom right point,
\item $C\sqcup X \sqcup Z$ for the bottom left point.
\end{itemize}
We have the following conditions on the cardinals of the sets $C$, $X$, $Y_1$, $Y_2$, $Z$, $T$ and $R$ (we name them $c$, $x$, $y_1$, $y_2$, $z$, $t$ and $r$):
\begin{itemize}
\item $c+y_1+ y_2+z = m$,
\item $c+y_1+y_2+t = n$,
\item $c+x+t = n+l$,
\item $c+x +z = m+l $,
\item $y_2 + t = j $,
\item $c+r+ x_1 + x_2 + y+z+t+r =N$.
\end{itemize}
From this we easily deduce that we have:
\[
x - (y_1 +y_2) =l,  \quad  x = n+l -c-t\quad \textrm{and} \quad  y_1 = j-t.
\]

If we are given a coloring of the boundary (which extend to a coloring of $\Gamma_j$), we can recover 
$C$, $X$, $Z$, $T$, $R$ and $Y:=Y_1\sqcup Y_2$ with the same strategy as for $\Gamma$. 

If we are given a partition of $\cN$ into six sets $C$, $X$, $Y$, $Z$, $T$ and $R$ such that their cardinals $c$, $x$, $y$, $z$, $t$ and $r$ satisfy 
\begin{itemize}
\item $c+y_1+ y_2+z = m$,
\item $c+y_1+y_2+t = n$,
\item $c+x+t = n+l$,
\item $c+x +z = m+l $,
\item $ t \leq j $,
\end{itemize}
every partition of $Y$ into two sets $Y_1$and $X_2$, such that $\#Y_2 = j-t$ provides a coloring of $\Gamma_j$. This means, that there exist exactly $
\begin{pmatrix}
n-c-t \\
j-t 
\end{pmatrix}
$ such colorings.

Note that if $j>k$, the coefficient multiplying $\Gamma_j$ is equal to zero. Hence we may suppose that $j\leq k$. In this case the condition for a coloring to be extendable to $\Gamma$ is stronger than the condition to be extendable to $\Gamma_j$. If a coloring $c_\partial$ does not extend to $\Gamma$, the equality simply says $0=0$.

Let us suppose that $c_\partial$ extends to $\Gamma$. We denote $C$, $R$, $X$, $Z$, $T$, $Y$ the partition of $\cN$ such that:
\begin{itemize}
\item  The top right point is colored by $C\sqcup Y \sqcup Z$,
\item  The top left point is colored by $C\sqcup X \sqcup T$,
\item  The bottom right point is colored by $C\sqcup Y \sqcup T$,
\item  The bottom left point  is colored by $C\sqcup X \sqcup Z$.
\end{itemize}
We have $\#X= \#Y +l$.

The colorings of $\Gamma$ which induce $c_\partial$ on the boundary are given by partitions $X_1\sqcup X_2$ of $X$ such that $X_1$ has $k-t$ elements. If we fix $c$ such a coloring (notations are given in Figure~\ref{fig:colgammanot}), we can compute $d((\Gamma,c)_b)$ for every bicolor $b$. The computations are done in Table~\ref{tab:leqsquare3type3a}. From the table we deduce that $d((\Gamma,c)_b)=\delta(c_\partial) + d(X_1\sqcup X_2) + d(Y\sqcup X_1)$ where $\delta(c_\partial)$ is a constant depending only on $c_\partial$. We obtain:
\[
\kupcs{\Gamma}^{c_\partial} = q^{\delta(c_\partial)}\sum_{\substack{X=X_1\sqcup X_2} \\ \#X_1 = k-t} q^{d(X_1\sqcup X_2) + d(Y\sqcup X_1)}.
\]

\begin{figure}[ht!]
  \centering
  \begin{tikzpicture}
    \begin{scope}
\coordinate (B1) at (-1,0);
\coordinate (B2) at (1,0);
\coordinate (C1) at (-1,1);
\coordinate (D1) at (-1,2);
\coordinate (C2) at (1,1);
\coordinate (D2) at (1,2);
\coordinate (T1) at (-1,3);
\coordinate (T2) at (1,3);
\draw[->] (B1) -- (C1) node[at start, below] {\tiny{$C\sqcup Z\sqcup Y$}};
\draw[<-] (D1) -- (C1) node[midway, left   ] {\tiny{$C\sqcup X_1\sqcup Y\sqcup Z \sqcup T$}};
\draw[->] (D1) -- (T1) node[at end , above ] {\tiny{$C\sqcup T\sqcup Y$}};
\draw[<-] (C2) -- (B2) node[at end, below] {\tiny{$C\sqcup X_1\sqcup X_2\sqcup T$}};
\draw[->] (C2) -- (D2) node[midway, right] {\tiny{$C\sqcup X_2$}};
\draw[<-] (T2) -- (D2) node[at start, above] {\tiny{$C\sqcup X_1 \sqcup X_2 \sqcup Z$}};
\draw[<-] (D2) -- (D1) node[midway, above] {\tiny{$X_1 \sqcup Z$}};
\draw[<-] (C1) -- (C2) node[midway, below] {\tiny{$X_1\sqcup T$}};
\end{scope}
    \begin{scope}[xshift=5cm]
       \begin{scope}
\coordinate (B1) at (-1,0);
\coordinate (B2) at (1,0);
\coordinate (C1) at (-1,1);
\coordinate (D1) at (-1,2);
\coordinate (C2) at (1,1);
\coordinate (D2) at (1,2);
\coordinate (T1) at (-1,3);
\coordinate (T2) at (1,3);
\draw[->] (B1) -- (C1) node[at start, below] {\tiny{$C\sqcup   Y_1\sqcup Y_2 \sqcup Z$}};
\draw[<-] (D1) -- (C1) node[midway, left   ] {\tiny{$C\sqcup   Y_1$}};
\draw[->] (D1) -- (T1) node[at end , above ] {\tiny{$C\sqcup   T\sqcup Y_1$}};
\draw[<-] (C2) -- (B2) node[at end, below] {\tiny{$C\sqcup   X\sqcup T $}};
\draw[->] (C2) -- (D2) node[midway, right] {\tiny{$C\sqcup    X \sqcup Y_2 \sqcup Z \sqcup T$}};
\draw[<-] (T2) -- (D2) node[at start, above] {\tiny{$C\sqcup   X \sqcup Z $}};
\draw[->] (D2) -- (D1) node[midway, above] {\tiny{$Y_2\sqcup T$}};
\draw[->] (C1) -- (C2) node[midway, below] {\tiny{$Y_2\sqcup Z$}};
\end{scope}
    \end{scope}
  \end{tikzpicture}

  \caption{Notations for the coloring $c$ of $\Gamma$ and the coloring $c_j$ of $\Gamma_j$.}
  \label{fig:colgammanot}
\end{figure}
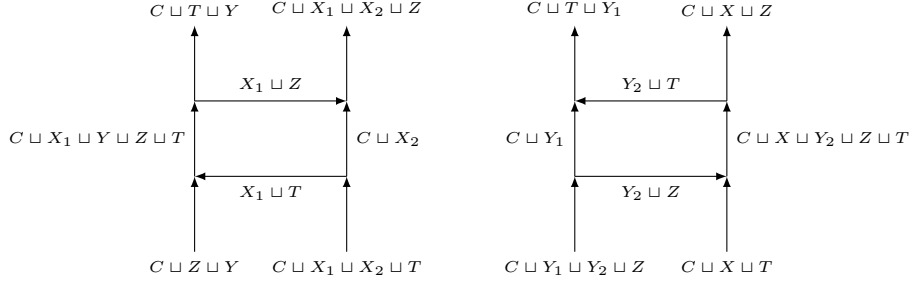

\newcommand{\z}{\cellcolor{red!25}}
\begin{table}[ht]
  \centering
  \begin{tabular}{|c|c|c|c|c|c|c|c|c|} \hline
$b^+\backslash b^-$ & $C$                       & $X_1$                & $X_2$                   & $Y$                  & $Z$                       & $T$                      & $R$                       \\ \hline 
$C$                 & $\emptysquare$            & $\zigzagleftup $     & $\onevertupempty$       & $\onevertemptyup$    & $\bottomrighttoplefthigh$ & $\bottomlefttoprightlow$ & $\twovertupup$            \\ \hline
$X_1$               & $\zigzagleftdown $        & $\emptysquare$       & \z $\squaremiddleneg$   & \z$\twohorrightleft$ & $\onehoremptyleft$        & $\onehorrightempty$      & $\zigzagrightup $         \\ \hline
$X_2$               & $\onevertdownempty$       & \z$\squaremiddlepos$ & $\emptysquare$          & $\twovertdownup$     & $\onehordeepemptyleft$    & $\onehordeeprightempty$  & $\onevertemptyup$         \\ \hline
$Y$                 & $\onevertemptydown$       & \z$\twohorleftright$ & $\twovertupdown$        & $\emptysquare$       & $\onehorleftempty$        & $\onehoremptyright$      & $\onevertupempty$         \\ \hline
$Z$                 & $\topleftbottomrighthigh$ & $\onehoremptyright$  & $\onehordeepemptyright$ & $\onehorrightempty$  & $\emptysquare$            & $\twohorrightright$      & $\bottomlefttoprighthigh$ \\ \hline
$T$                 & $\toprightbottomleftlow$  & $\onehorleftempty$   & $\onehordeepleftempty$  & $\onehoremptyleft$   & $\twohorleftleft$         & $\emptysquare$           & $\bottomrighttopleftlow$  \\ \hline
$R$                 & $\twovertdowndown$        & $\zigzagrightdown$   & $\onevertemptydown$     & $\onevertdownempty$  & $\toprightbottomlefthigh$ & $\topleftbottomrightlow$ & $\emptysquare$            \\ \hline
  \end{tabular}

  \caption{Computations of $d(\Gamma, c)_b$. The red cells emphasize the contributions which do not only depend on $c_\partial$.}
  \label{tab:leqsquare3type3a}
\end{table}
\begin{table}[ht]
  \centering
  \begin{tabular}{|c|c|c|c|c|c|c|c|c|} \hline
$b^+\backslash b^-$ & $C$                       & $X$                  & $Y_1$                   & $Y_2$                & $Z$                      & $T$                       & $R$                       \\ \hline 
$C$                 & $\emptysquare$            & $\onevertupempty$    & $\onevertemptyup$       & $\zigzagrightup$     & $\bottomrighttopleftlow$ & $\bottomlefttoprighthigh$ & $\twovertupup$            \\ \hline
$X$                 & $\onevertdownempty$       & $\emptysquare$       & $\twovertdownup$        & \z$\twohorrightleft$ & $\onehoremptyleft$       & $\onehorrightempty$       & $\onevertemptyup$         \\ \hline
$Y_1$               & $\onevertemptydown $      & $\twovertdownup$     & $\emptysquare$          & \z$\squaremiddleneg$ & $\onehordeepleftempty$   & $\onehordeepemptyright$   & $\onevertupempty$         \\  \hline
$Y_2$               & $\zigzagrightdown$        & \z$\twohorleftright$ & \z$\squaremiddlepos$    & $\emptysquare$       & $\onehorleftempty$       & $\onehoremptyright$       & $\zigzagleftup$           \\ \hline
$Z$                 & $\topleftbottomrightlow$  & $\onehoremptyright$  & $\onehordeeprightempty$ & $\onehorrightempty$  & $\emptysquare$           & $\twohorrightright$       & $\bottomlefttoprightlow$  \\\hline
$T$                 & $\toprightbottomlefthigh$ & $\onehorleftempty$   & $\onehordeepemptyleft$  & $\onehoremptyleft$   & $\twohorleftleft$        & $\emptysquare$            & $\bottomrighttoplefthigh$ \\\hline
$R$                 & $\twovertdowndown$        & $\onevertemptydown$  & $\onevertdownempty$     & $\zigzagleftdown$    & $\toprightbottomleftlow$ & $\topleftbottomrighthigh$ & $\emptysquare$            \\ \hline
  \end{tabular}

  \caption{Computations of $d(\Gamma_j, c_j)_b$  The red cells emphasize the contributions which do not only depend on $c_\partial$.}
  \label{tab:leqsquare3type3b}
\end{table}

The colorings of $\Gamma_j$ which induce $c_\partial$ on the boundary are given by partitions $Y_1\sqcup Y_2$ of $Y$ such that $Y_2$ has $j-t$ elements. If we fix $c_j$ such a coloring (notations are given in Figure~\ref{fig:colgammanot}), we can compute $d((\Gamma_j,c_j)_b)$ for every bicolor $b$. The computations are done in Table~\ref{tab:leqsquare3type3b}. From the table we deduce that $d((\Gamma_j,c_j)_b)=\delta(c_\partial) + d(Y_1\sqcup Y_2) + d(Y_2\sqcup X)$ where $\delta( c_\partial)$ is the same constant as for $d((\Gamma, c)_b$. We obtain
\[
\kupcs{\Gamma_j}^{c_\partial} = q^{\delta(c_\partial)}\sum_{\substack{Y=Y_1\sqcup Y_2} \\ \#Y_1 = j-t} q^{d(Y_1\sqcup Y_2) + d(Y_2\sqcup X)}.
\]

We conclude the proof by applying Lemma~\ref{lem:key} with $k_1 = k-t$ and $j_2= j-t$.

\end{proof}

\section{A new of skein relation}
\label{sec:new-kind-skein}

Now that we know that $\kupcs{\cdot}$ and $\kups{\cdot}$ coincide on MOY graph, we might denote both by $\kups{\cdot}_N$. We would like now to relate $\sll_N$-evaluations of MOY graphs for different $N$'s. 

\begin{dfn}
  Let $\Gamma$ be a closed MOY graph and $A=\{\alpha_1, \dots, \alpha_k\}$ a collection of disjoint oriented cycles in $\Gamma$. We denote by $\Gamma^{A_N}$ the MOY graph obtained by reversing the orientations of every edge included in $A$ and replacing the label $i$ of such an edge by $N-i$.
\end{dfn}

\begin{rmk}
  The MOY graphs $\Gamma$ and $\Gamma^{A_N}$ are equivalent as $\sll_N$-webs but not as $\sll_{N-1}$-webs (see Remark~\ref{rmk:eqwebcoloring}).
\end{rmk}

\begin{prop}
\label{prop:newskein}
  Let $\Gamma$ be a MOY graph. The following equality holds:
  \begin{align*}
\kups{\Gamma}_N &= \sum_{\textrm{$A$ collection of disjoint cycles}} q^{-w(\Gamma^{A_N})}\kups{\Gamma^{A_N}}_{N-1}  \\ &= \sum_{\textrm{$A$ collection of cycles}} q^{+w(\Gamma^A)}\kups{\Gamma^{A_N}}_{N-1}.
\end{align*}
\end{prop}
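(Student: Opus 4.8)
The plan is to work entirely with the combinatorial evaluation $\kupcs{\cdot}$, which equals $\kups{\cdot}$ by Theorem~\ref{thm:evalcol}, and to organize the $\sll_N$-colorings of $\Gamma$ according to how the largest color $N$ is distributed. Given an $\sll_N$-coloring $c$ of $\Gamma$, let $A = A(c)$ be the set of edges $e$ with $N \in c(e)$; by the flow interpretation of Remark~\ref{rmk:flow}, $A$ is a collection of disjoint oriented cycles. I would set up a bijection between $\sll_N$-colorings $c$ of $\Gamma$ and pairs $(A, c')$, where $A$ ranges over collections of disjoint cycles and $c'$ is an $\sll_{N-1}$-coloring of $\Gamma^{A_N}$: off $A$ put $c'(e) = c(e)$, and on $A$ put $c'(e) = \overline{c(e)}$ (complement in $\cN$), which lies in $\llbracket 1, N-1 \rrbracket$ and has cardinality $N-\lambda(e)$, matching the label of the reversed edge in $\Gamma^{A_N}$. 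The inverse adds the color $N$ back along $A$; checking that both $c$ and $c'$ are genuine colorings is a direct verification of the vertex condition. Crucially, $c'$ is exactly the $\sll_N$-coloring of $\Gamma^{A_N}$ corresponding to $c$ under the $\sll_N$-web equivalence $\Gamma \sim \Gamma^{A_N}$, and it never uses the color $N$.

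Next I would compare degrees bicolor by bicolor. Since $c'$ corresponds to $c$ under an $\sll_N$-web equivalence, Remark~\ref{rmk:eqwebcoloring}(1) gives $d(\Gamma,c)_b = d(\Gamma^{A_N},c')_b$ for every bicolor $b \subseteq \cN$. I then split the $\sll_N$-degree according to whether $b$ contains $N$:
\[
d_N(\Gamma,c) = \sum_{\substack{b\subseteq \llbracket 1,N-1\rrbracket \\ \#b = 2}} d(\Gamma^{A_N},c')_b \;+\; \sum_{i=1}^{N-1} d(\Gamma^{A_N},c')_{\{i,N\}}.
\]
The first sum is precisely the $\sll_{N-1}$-degree $d_{N-1}(\Gamma^{A_N},c')$, since the $\sll_{N-1}$-bicolors are the $2$-subsets of $\llbracket 1,N-1\rrbracket$. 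For the second sum, recall that $c'$ omits the color $N$: for $b=\{i,N\}$ one has $b^-=i$, so an edge survives in the state iff $i \in c'(e)$, and every surviving edge has its orientation reversed (because $c'(e)\cap b=\{b^-\}$). Thus $(\Gamma^{A_N},c')_{\{i,N\}}$ is the family of color-$i$ cycles of $c'$ with all orientations reversed, and its degree is minus the algebraic number of those cycles.

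Summing over $i$, the heart of the argument is the identity
\[
\sum_{i=1}^{N-1} d(\Gamma^{A_N},c')_{\{i,N\}} = -\sum_{i=1}^{N-1} \#_{\mathrm{alg}}\big(\text{color-}i\text{ cycles of }c'\big) = -w(\Gamma^{A_N}),
\]
whose last step is the main obstacle: the superimposed colored cycles of $c'$ are exactly the strands produced by the parallel-copy blow-up that defines $w(\Gamma^{A_N})$ (an edge of label $\mu$ carries $\mu$ colors, hence $\mu$ strands, with matching orientations), and the algebraic circle count is a turning number that depends only on these arcs and on the local turning at the trivalent vertices — not on how the arcs close up into cycles. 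So it equals $w(\Gamma^{A_N})$ for \emph{any} coloring, which I would prove using the curvature/Chasles viewpoint already invoked for Lemma~\ref{lem:additivityofdegree}, with a careful accounting of the vertex turning. Combining the three displays gives $d_N(\Gamma,c) = d_{N-1}(\Gamma^{A_N},c') - w(\Gamma^{A_N})$, and summing over $c$ through the bijection $c \leftrightarrow (A,c')$ yields
\[
\kups{\Gamma}_N = \sum_{A} q^{-w(\Gamma^{A_N})}\,\kups{\Gamma^{A_N}}_{N-1}.
\]

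Finally, the second equality follows from the first together with the writhe identity $w(\Gamma^A) = -w(\Gamma^{A_N})$: reversing all orientations leaves the closed $\sll_{N-1}$-evaluation unchanged while negating the writhe, so the two right-hand sides coincide term by term. I would check this sign flip directly from the blow-up definition of $w$, tracking how the replacement $i \mapsto N-i$ on the reversed cycles of $A$ reverses the net turning contribution. The genuinely nontrivial ingredient remains the turning-number computation; everything else is bookkeeping built on the coloring bijection and Remark~\ref{rmk:eqwebcoloring}.
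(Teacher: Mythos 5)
Your proof of the \emph{first} equality is correct and is essentially the paper's own argument: the same bijection $c \leftrightarrow (A(c),c')$ between $\sll_N$-colorings of $\Gamma$ and pairs consisting of a collection of disjoint cycles together with an $\sll_{N-1}$-coloring of $\Gamma^{A_N}$, the same splitting of the degree into bicolors inside $\llbracket 1,N-1\rrbracket$ (giving $d_{N-1}$) and bicolors $\{i,N\}$, and the same identity $\sum_{i=1}^{N-1} d(\Gamma^{A_N},c')_{\{i,N\}} = -w(\Gamma^{A_N})$. The paper asserts this last identity with no justification at all, so your turning-number elaboration (the algebraic circle count depends only on the oriented arcs and the local turning at vertices, not on how the strands are paired up into closed cycles) is a refinement of, not a departure from, the paper's proof.

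Your treatment of the \emph{second} equality, however, contains a genuine error. The writhe identity $w(\Gamma^{A}) = -w(\Gamma^{A_N})$ is false: for $A = \varnothing$ any reasonable reading gives $\Gamma^{A} = \Gamma^{A_N} = \Gamma$, so the identity would force $w(\Gamma)=0$, which already fails for a single counterclockwise circle labelled $1$. Relatedly, the two displayed sums are \emph{not} equal term by term, so no pointwise writhe identity can bridge them: for $\Gamma$ a counterclockwise circle labelled $1$ and $N=2$, the first sum is $q^{-1}$ (from $A=\varnothing$) plus $q^{+1}$ (from $A$ the circle, since then $w(\Gamma^{A_N})=-1$), while the second sum assigns $q^{+1}$ to $A=\varnothing$ and $q^{-1}$ to the circle --- the same total, but with the terms permuted. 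The paper's route avoids this entirely: every closed MOY evaluation is a symmetric Laurent polynomial in $q$, so applying the substitution $q \mapsto q^{-1}$ to the already-proved first equality fixes $\kups{\Gamma}_N$ and each $\kups{\Gamma^{A_N}}_{N-1}$ while flipping the exponent $-w(\Gamma^{A_N})$ to $+w(\Gamma^{A_N})$, which is exactly the second line (whose notation $\Gamma^A$ is best read as $\Gamma^{A_N}$). Your observation that reversing all orientations of a closed MOY graph preserves its evaluation and negates its writhe is true, but it does not repair the argument, because the evaluation factor in the second sum is still that of $\Gamma^{A_N}$ itself, not of its orientation reverse; you should replace this step by the $q \leftrightarrow q^{-1}$ symmetry argument.
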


\begin{proof}
  We only prove 
\[\kups{\Gamma}_N = \sum_{\textrm{$A$ collection of cycles}} q^{-w(\Gamma^{A_N})}\kups{\Gamma^{A_N}_{N-1}} \]
since the other equality follows from the symmetry of $\kups{\cdot}$ in $q$ and $q^{-1}$. This equality is a consequence from the following observation: If $c$ is a coloring of $\Gamma$, we can consider the set $A(c)$ of disjoint cycles which consists of the edges whose colorings contain $N$. The web $\Gamma^{A(c)_N}$ inherits a coloring $c'$ such that none of the colors of the edges contain $N$. The degree $d_N$ of the coloring $c'$ as a coloring of an $\sll_{N}$-web is equal to $d_{N-1} - w(\Gamma^{A_N})$ where $d_{N-1}$ is the degree of the coloring $c'$ as a coloring of an $\sll_{N-1}$-web: 
\[
d_N - d_{N-1} = \sum_{j=1}^{N-1} d(\Gamma^{A_N},c')_{(j,N)} =  - w(\Gamma^{A_N}).
\]
Conversely, if $A$ is a collection of disjoint cycles of $\Gamma$, and $c'$ a coloring of $\Gamma^{A_N}$ as a $\sll_{N-1}$-web, the $\sll_N$-web $\Gamma$ inherits a coloring $c$ and the previous equality holds.  
\end{proof}

\bibliographystyle{alpha}
\bibliography{../../Latex/biblio}

\end{document}